\definecolor{webgreen}{rgb}{0,0,1}
\definecolor{recrown}{rgb}{1,.2,.6}
\begin{document}
\newtheorem{theorem}{Theorem}
\newtheorem{corollary}[theorem]{Corollary}
\newtheorem{lemma}[theorem]{Lemma}
\newtheorem{proposition}[theorem]{Proposition}
\theoremstyle{definition}
\newtheorem{example}{Example}
\newtheorem{examples}{Examples}
\newtheorem*{notation}{Notation}
\theoremstyle{remark}
\newtheorem*{remarks}{\bf Remarks}
\theoremstyle{thmx}
\newtheorem{thmx}{\bf Theorem}
\renewcommand{\thethmx}{\text{\Alph{thmx}}}
\newtheorem{lemmax}{Lemma}
\renewcommand{\thelemmax}{\text{\Alph{lemmax}}}
\theoremstyle{thmx}
\newtheorem{property}{\bf Property}
\renewcommand{\theproperty}{\text{\Alph{property}}}
\theoremstyle{definition}
\newtheorem*{definition}{Definition}
\numberwithin{theorem}{section}
\numberwithin{example}{section}
\newtheorem*{remark}{\bf Remark}
\newcommand{\C}{\mathcal{C}_{x,y}}
\newcommand{\s}{\mathbb{S}}
\newcommand{\rr}{\mathbb{R}_0}

\title[]{\bf Round and sleek topological spaces}
\markright{}
\subjclass[2010]{Primary 54E25; 54E35; 46A55; 52A07; 46B20}
\keywords{Round map; Sleek map; Round topology, Sleek topology; Topological space; Continuous map; Topological vector space}
\author{Jitender Singh$^{\dagger}$}
\address{$~^\dagger$ Department of Mathematics, Guru Nanak Dev University, Amritsar-143005, India}
\footnotetext[2]{$^{}$Corresponding author: jitender.math@gndu.ac.in}
\date{}
\maketitle
\begin{abstract}
In this paper, we introduce round and sleek topological spaces and study their properties.
\end{abstract}
\section{Introduction}
 Let $X$ be a metrizable space whose topology is induced by a metric $d$ on $X$. The metric $d$ is called a round metric for $X$ if closure of each open ball is the corresponding closed ball in $X$, and a metrically round space is a metrizable space that admits a round metric for its topology \cite{Na}. Similarly, the metric $d$ is called a sleek metric for $X$ if interior of each closed ball is the corresponding open ball in $X$. A metrically  sleek space is a metrizable space that admits a sleek metric for its topology \cite{JSTD2022}. All normed linear spaces and all strictly convex linear metric space are metrically round as well as sleek, but a metric space having at least two points and having an isolated point fails to be metrically round or sleek.
 In general, the class of metrically round metric spaces and the class of metrically sleek metric spaces turn out to be different in the sense that there exist metrizable spaces which are metrically round as well as sleek, round but never sleek, sleek but never round, and neither round nor sleek \cite{JSTD2021,JSTD2022}. Recently in \cite{JSTDarXiv}, some results were proved for metrically round and sleek subsets of linear metric spaces and metric spaces in the subspace topology. For example, an algebraically convex subset of a normed linear space or a linear metric space is metrically round in the subspace topology. Also, an externally convex round metric space is metrically sleek (See \cite[Theorem 3.10]{JSTDarXiv}).
The present work is motivated from the following characterization theorem.
\begin{thmx}[\cite{JSTD2022}]\label{thA} Let $(X,d)$ be a metric space having at least two points.
\begin{enumerate}[label=(\alph*)]
\item\label{Aa} The metric $d$ is not round for $X$ if and only if there exists an open set $U$ in $X$ and $x\in X\setminus U$ for which the map $d(x,\cdot):U\rightarrow \mathbb{R}$ has a minimum.
\item\label{Ab} The metric $d$ is not sleek for $X$ if and only if there exists an open set $U$ in $X$ and $x \in U$ for which the map $d(x,\cdot):U\rightarrow \mathbb{R}$ has a maximum.
\end{enumerate}
\end{thmx}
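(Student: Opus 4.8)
The plan is to convert each failure of a ball identity into the existence of an extremum of the radial map $d(x,\cdot)$ on a suitable open set, and then to run the argument in reverse. I write $B(x,r)=\{y:d(x,y)<r\}$ and $B[x,r]=\{y:d(x,y)\le r\}$ for the open and closed balls of radius $r>0$, and I would begin by recording the two inclusions valid in every metric space, namely $B(x,r)\subseteq\overline{B(x,r)}\subseteq B[x,r]$ and $B(x,r)\subseteq\operatorname{int}B[x,r]\subseteq B[x,r]$. Roundness asserts $\overline{B(x,r)}=B[x,r]$ and sleekness asserts $\operatorname{int}B[x,r]=B(x,r)$, so non-roundness produces a point $z\in B[x,r]\setminus\overline{B(x,r)}$ and non-sleekness a point $z\in\operatorname{int}B[x,r]\setminus B(x,r)$. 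The first useful observation is that either discrepancy can only live on the sphere: in the first case $d(x,z)<r$ would force $z\in B(x,r)\subseteq\overline{B(x,r)}$, and in the second case $z\in\operatorname{int}B[x,r]\subseteq B[x,r]$ together with $z\notin B(x,r)$ forces $r\le d(x,z)\le r$; hence $d(x,z)=r$ in both situations. This is exactly what makes the extremum reformulation available.

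For (a), in the forward direction I would take $z$ with $d(x,z)=r$ and $z\notin\overline{B(x,r)}$ and choose an open neighbourhood $V$ of $z$ with $V\cap B(x,r)=\emptyset$, i.e. $d(x,\cdot)\ge r$ on $V$; since $d(x,z)=r$, the value $r$ is the minimum of $d(x,\cdot)$ on $V$. Because $x\in B(x,r)$, the set $V$ automatically omits $x$, so $U=V$ is precisely the required open set with $x\in X\setminus U$. For the converse I would reverse the steps: from an open $U$ with $x\notin U$ on which $d(x,\cdot)$ attains a minimum $r=d(x,z)$ (which is positive because $x\notin U$ gives $z\neq x$), the inequality $d(x,\cdot)\ge r$ on $U$ reads $U\cap B(x,r)=\emptyset$, so the neighbourhood $U$ of $z$ misses $B(x,r)$ and $z\notin\overline{B(x,r)}$, while $d(x,z)=r$ keeps $z\in B[x,r]$; thus roundness fails.

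Part (b) is formally dual, but I expect the crux to be a genuine asymmetry: the characterisation now insists that the open set contain $x$. In the forward direction I would take $z$ with $d(x,z)=r$ and $z\in\operatorname{int}B[x,r]$, choose an open $W\ni z$ with $W\subseteq B[x,r]$, so that $d(x,\cdot)\le r$ on $W$ with maximum $r$ at $z$. Here $W$ need not contain $x$, and the remedy I would use is to pass to $U=W\cup B(x,r)$, which is open, contains $x$ since $r>0$, still satisfies $d(x,\cdot)\le r$ everywhere, and still has $z$ as a maximiser. Absorbing the open ball to capture the centre is the one step with no counterpart in (a), and it is where I would be most careful. The converse then mirrors (a): if $d(x,\cdot)$ attains a maximum $r=d(x,z)>0$ on an open $U\ni x$, then $U\subseteq B[x,r]$ shows $z\in\operatorname{int}B[x,r]$ while $d(x,z)=r$ gives $z\notin B(x,r)$, so sleekness fails.

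It remains to dispose of the degenerate value $r=0$ in the converse of (b). A maximum value $0$ of $d(x,\cdot)$ on an open $U\ni x$ forces $U=\{x\}$, so $x$ is isolated; to conclude non-sleekness I would pick any $y\neq x$ (available as $X$ has at least two points) and apply the positive-radius argument to $U'=B\big(y,d(x,y)\big)\cup\{x\}$, an open set containing $y$ on which $d(y,\cdot)$ attains its maximum $d(x,y)>0$ at the isolated point $x$. This both completes the equivalence and recovers the introduction's remark that a space with an isolated point is neither round nor sleek.
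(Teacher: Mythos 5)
The paper only quotes Theorem~\ref{thA} from \cite{JSTD2022} and supplies no proof, so there is nothing internal to compare against; your argument is correct and is the natural one, reducing each ball identity to the observation that any discrepancy must occur on the sphere $d(x,\cdot)=r$. The two places needing care --- absorbing $B(x,r)$ into the open set in part (b) so that it contains the centre, and the degenerate maximum value $r=0$ forcing an isolated point --- are both handled correctly.
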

Theorem \ref{thA} tells us that if $d$ is a round or sleek metric for $X$ (having at least two points), then for every $x\in X$, the map $d(x,\cdot)$ is not locally constant.

In the present paper, we extend round and sleek notions from metric spaces to arbitrary topological spaces appropriately and prove analogous results for such spaces. As we shall see in this paper, many of the results on round and sleek properties of metrizable spaces extend to arbitrary topological spaces. The paper is organized as follows. In Section \ref{sec:2}, we define round topology and sleek topology on an arbitrary set and study their properties. In Section \ref{sec:3}, we discuss round and sleek subsets of topological vector spaces in the subspace topology. Some examples are discussed in Section \ref{sec:4}.
\section{Round and sleek topological spaces}\label{sec:2}
Unless otherwise specified, the symbol $X$ will denote a topological space having at least two points and $X^2$, the cartesian product $X\times X$ equipped with the product topology. Also, $\rr$ will denote the set of all nonnegative real numbers endowed with the topology $\rr$ inherits as a subspace of real line.

Theorem \ref{thA} allows us the following extended notions.
\begin{definition}[Round map and round space]
Let  $f:X^2\rightarrow \rr$ be a continuous map. We call $f$, a round map for $X$ if for every proper nonempty open set $U$ in $X$ and $x\in X\setminus U$, the map $f(x,\cdot):U\rightarrow \rr$ has no minimum. A topological space admitting  a round map will be called a round space. The topology of a round space will be called a \emph{round topology}.
\end{definition}
\begin{definition}[Sleek map and sleek space]
Let $f:X^2\rightarrow \rr$ be a continuous map. We call $f$, a sleek map for $X$ if for every nonempty open set $U$ in $X$ and $x \in U$, the map $f(x,\cdot):U\rightarrow \rr$ has no maximum. A topological space admitting  a sleek map will be called a sleek space. The topology of a sleek space will be called a \emph{sleek topology}.
\end{definition}
Note from the preceding definitions, that if a topologically space $X$ is metrically round (resp. sleek), then $X$ is a round (resp. sleek) space. Now we have the following results.
\begin{proposition}\label{p1} Let $X$ be a topological space having at least two points.
\begin{enumerate}[label=(\alph*)]
\item\label{p1a} If $X$ has an isolated point, then no $f$ is round or sleek for $X$.
\item\label{p1b} If $X$ is a disjoint union of two nonempty subspaces $A$ and $B$, where $A$ is closed and $B$ is compact, then no $f$ is round for $X$.
\item\label{p1c} If $X$ is compact, then no $f$ is sleek for $X$.
\item\label{p1d} If $X$ has at least one proper nonempty open subset, then there exists a continuous non-round $f$ for $X$.
\item\label{p1e} There exists a continuous non-sleek $f$ for $X$.
\item\label{p1f} The indiscrete topology on $X$ is a round topology but not a sleek topology.
\end{enumerate}
\end{proposition}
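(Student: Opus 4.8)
The plan is to dispatch the six parts with two recurring devices. For the ``no round/sleek map exists'' assertions \ref{p1a}, \ref{p1b}, \ref{p1c}, I would exhibit, for an \emph{arbitrary} continuous $f:X^2\to\rr$, a witnessing open set $U$ and point $x$ at which $f(x,\cdot)$ attains the forbidden extremum; the extremum is forced either because the relevant domain is a singleton or because it is compact and $f(x,\cdot)$ is continuous. For the ``a non-round/non-sleek map exists'' assertions \ref{p1d}, \ref{p1e}, I would simply take $f$ to be the constant map $f\equiv 0$, which is continuous and attains both a minimum and a maximum on every nonempty domain. Throughout, the continuity of $f(x,\cdot)$ follows from that of $f$ together with the continuity of $y\mapsto(x,y)$, and the existence of extrema over compact domains is the extreme value theorem applied to the compact image in $\rr\subseteq\mathbb{R}$.

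For \ref{p1a}, if $p$ is an isolated point then $\{p\}$ is a proper (as $X$ has at least two points) nonempty open set. Taking $U=\{p\}$ and any $x\in X\setminus U$ shows $f(x,\cdot)$ has a minimum, so $f$ is not round; taking $U=\{p\}$ and $x=p\in U$ shows it has a maximum, so $f$ is not sleek. For \ref{p1b}, since $A$ is closed and $X=A\sqcup B$, the set $B=X\setminus A$ is open, nonempty, and proper (because $A\neq\emptyset$); choosing $U=B$ and any $x\in A=X\setminus U$, the restriction $f(x,\cdot):B\to\rr$ is continuous on the compact set $B$, hence attains a minimum, so $f$ is not round. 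For \ref{p1c}, take $U=X$ and any $x\in X$; the continuous map $f(x,\cdot):X\to\rr$ attains a maximum on the compact space $X$, so $f$ is not sleek.

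For \ref{p1d}, the hypothesis furnishes a proper nonempty open $U$; since $U$ is proper, $X\setminus U$ is nonempty, so pick $x\in X\setminus U$. The constant map $f\equiv 0$ restricts to a constant on $U$, which has a minimum, so this continuous $f$ is non-round. For \ref{p1e}, take $U=X$ and any $x\in X$; the constant map again has a maximum on $U$, so it is a continuous non-sleek map; this needs no extra hypothesis, since $X$ itself is always a nonempty open set.

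Finally, for \ref{p1f} I would use two features of the indiscrete topology. First, its only open sets are $\emptyset$ and $X$, so there is \emph{no} proper nonempty open set; hence the round condition is vacuously satisfied and the continuous constant map $f\equiv 0$ is round, making the topology round. Second, the product $X^2$ is again indiscrete, and a continuous map from an indiscrete space into the Hausdorff space $\rr$ must be constant: otherwise two points with distinct images would pull back disjoint neighborhoods to two disjoint nonempty open subsets of $X^2$, impossible in an indiscrete space. Thus every continuous $f$ is constant, so on the only nonempty open set $U=X$ the map $f(x,\cdot)$ attains a maximum for each $x$; hence no $f$ is sleek and the topology is not sleek. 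The one genuinely non-formal step is this last observation that continuity into a Hausdorff target forces constancy on an indiscrete space; everything else reduces to the extreme value theorem and the trivial behaviour of constant maps.
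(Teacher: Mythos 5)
Your proposal is correct and follows essentially the same route as the paper: singleton open sets for \ref{p1a}, the extreme value theorem on the compact open set $B$ (resp.\ on $X$) for \ref{p1b} and \ref{p1c}, the constant map for \ref{p1d} and \ref{p1e}, and vacuity of the round condition for the indiscrete topology. The only (minor) deviation is in the sleek half of \ref{p1f}, where you argue that continuity into the Hausdorff space $\rr$ forces $f$ to be constant on the indiscrete $X^2$, whereas the paper simply observes that an indiscrete space is compact and reuses \ref{p1c}; both arguments are valid.
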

\begin{proof}
Let $f: X^2\rightarrow \rr$ be any continuous map.

$(a)$ If $y\in X$ is an isolated point of $X$, then $\{y\}$  is open in $X$, and so, for any $x$ in $X$ with $x\neq y$, the map $f(x,\cdot):\{y\}\rightarrow \rr$ has the minimum value $f(x,y)$. So, $f$ is not a round map for $X$. Similarly, the map $f(y,\cdot):\{y\}\rightarrow \rr$ has the maximum value $f(y,y)$, which shows that $f$ is not a sleek map for $X$.

$(b)$ By the hypothesis, $B=X\setminus A$ and $A$ is closed in $X$. So, the set  $B$ is open in $X$. Let $x\in A$. Since $B$ is compact, by extreme value theorem, the continuous map $f(x,\cdot):B\rightarrow \rr$ has a minimum. So, $f$ is not a round map for $X$.

$(c)$ Since $X$ is compact and for any $x\in X$, the map $f(x,\cdot):X\rightarrow \rr$ is continuous, by extreme value theorem, $f(x,\cdot)$ has a maximum, and so, $f$ is not a sleek map for $X$.

$(d)\& (e)$ To prove $(d)$ with the given hypothesis and $(e)$,  we observe that any constant $f$ is neither round nor sleek for $X$.

$(f)$ Since there is no proper nonempty open subset of $X$ in the indiscrete topology, any continuous map $f:X^2\rightarrow \rr$ is a round map for $X$. So, the indiscrete topology on $X$ is a round topology. Since $X$ is compact in the indiscrete topology, by \ref{p1c}, no $f$ can be a sleek map for $X$.
\end{proof}
\begin{proposition}
 Let $X$ be a set having at least two points. Let $\tau$ be a topology of finite cardinality on $X$. The topology $\tau$ is not a sleek topology. If $\tau$ is not the indiscrete topology, then $\tau$ is never a round topology.
\end{proposition}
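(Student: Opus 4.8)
The plan is to reduce both assertions to Proposition \ref{p1} by exploiting a single elementary observation: a space whose topology $\tau$ has only finitely many members is compact. Indeed, any open cover of $X$ is a subfamily of $\tau$, hence finite, so it is already a finite subcover; thus $X$ is compact whenever $|\tau|<\infty$. The same reasoning applies to subspaces, since the subspace topology is a quotient of $\tau$ by intersection and therefore also finite.

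For the sleek assertion I would argue directly. Since $\tau$ has finite cardinality, $X$ is compact by the observation above, and Proposition \ref{p1}\ref{p1c} then shows that no continuous $f:X^2\rightarrow\rr$ can be a sleek map for $X$. Hence $\tau$ is not a sleek topology, with no further hypothesis required. For the round assertion under the extra hypothesis that $\tau$ is not indiscrete, I would produce a decomposition to which Proposition \ref{p1}\ref{p1b} applies. Because $\tau$ is not indiscrete, there is a proper nonempty open set $U\in\tau$. Setting $A=X\setminus U$ and $B=U$, the set $A$ is closed and nonempty (as $U$ is proper), $B$ is nonempty, and $X=A\sqcup B$. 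The remaining point is that $B=U$ is compact in the subspace topology: this topology is $\{V\cap U:V\in\tau\}$, which is finite because $\tau$ is, so the covering argument above makes $U$ compact. With $A$ closed and $B$ compact, Proposition \ref{p1}\ref{p1b} gives that no $f$ is round for $X$, and therefore $\tau$ is not a round topology.

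I do not anticipate a genuine obstacle; the whole content lies in recognizing that finiteness of $\tau$ forces compactness of $X$ and of its subspaces, after which the two cited parts of Proposition \ref{p1} finish the argument. The only subtlety worth flagging is that the non-indiscrete hypothesis is genuinely needed for the round conclusion: by Proposition \ref{p1}\ref{p1f} the (finite) indiscrete topology is round, so some assumption beyond mere finiteness of $\tau$ is unavoidable.
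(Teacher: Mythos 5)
Your proof is correct, and the sleek half is exactly the paper's argument: finitely many open sets force compactness of $X$, and Proposition \ref{p1}\ref{p1c} finishes. For the round half, however, you take a genuinely different route. The paper argues that on a space with finite topology every continuous $f(x,\cdot):X\rightarrow\rr$ is locally constant (each point has a minimal open neighbourhood, on which a continuous map into a Hausdorff space must be constant), so $f(x,\cdot)$ attains a minimum on a proper nonempty open set and $f$ cannot be round. You instead observe that a proper nonempty open set $U$ is itself compact in the subspace topology (its topology is again finite), so $X=(X\setminus U)\sqcup U$ is a decomposition into a closed set and a compact set, and Proposition \ref{p1}\ref{p1b} applies. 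Both arguments are sound; yours is more economical in that it reuses an already-proved part of Proposition \ref{p1} and in fact proves something more general --- any non-indiscrete space all of whose open subsets are compact fails to be round --- while the paper's local-constancy argument exposes the specific structural reason (minimal open neighbourhoods) that finite topologies kill roundness. Your closing remark that the non-indiscrete hypothesis is unavoidable, via Proposition \ref{p1}\ref{p1f}, is also apt.
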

\begin{proof} Since $\tau$ is of finite cardinality, the topological space $(X,\tau)$ is compact. By Proposition \ref{p1}\ref{p1c}, the topology $\tau$ is never sleek. Now assume that $\tau$ is not the indiscrete topology. If $f:X^2\rightarrow \rr$ is a continuous map, then for any $x\in X$, the map $f(x,\cdot):X\rightarrow \rr$ is locally constant, and so, $f$ is never a round map for $(X,\tau)$. This proves that $\tau$ is not a round topology on $X$.
\end{proof}
\begin{proposition}\label{p5}
    If $X$ is  a round (resp. sleek) topological space, then there exists a bounded round  (resp. sleek) map for $X$.
\end{proposition}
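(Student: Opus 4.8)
The plan is to transform a given round (resp. sleek) map into a bounded one by post-composing it with a suitable strictly increasing homeomorphism of $\rr$ onto a bounded interval, exploiting the fact that such a transformation does not change where a real-valued function attains, or fails to attain, its extrema.

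First I would suppose $X$ is round and fix a round map $f:X^2\to\rr$ for $X$. I would then introduce the function $\phi:\rr\to\rr$ given by $\phi(t)=t/(1+t)$. This $\phi$ is continuous and strictly increasing, and maps $\rr$ onto the bounded interval $[0,1)$; in particular it is an order isomorphism of $\rr$ onto $[0,1)$ whose inverse $\phi^{-1}(s)=s/(1-s)$ is again continuous and strictly increasing. Setting $g=\phi\circ f:X^2\to\rr$, the map $g$ is continuous as a composition of continuous maps, and its range is contained in $[0,1)$, so $g$ is bounded.

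The key step is to verify that $g$ inherits the round property from $f$. I would fix a proper nonempty open set $U$ in $X$ and a point $x\in X\setminus U$. Because $\phi$ is strictly increasing, for any $y_0\in U$ one has $f(x,y_0)\le f(x,y)$ for all $y\in U$ if and only if $g(x,y_0)=\phi(f(x,y_0))\le\phi(f(x,y))=g(x,y)$ for all $y\in U$, where the forward implication uses the monotonicity of $\phi$ and the reverse implication uses the monotonicity of $\phi^{-1}$. Hence $g(x,\cdot)$ attains a minimum on $U$ if and only if $f(x,\cdot)$ does. Since $f$ is round, $f(x,\cdot)$ has no minimum on $U$, so neither does $g(x,\cdot)$; as $U$ and $x$ were arbitrary, $g$ is a bounded round map for $X$.

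For the sleek case the argument is identical after replacing $X\setminus U$ by $U$, the condition $x\in X\setminus U$ by $x\in U$, and ``minimum'' by ``maximum'': the same strictly increasing $\phi$ sends maxima to maxima, so nonexistence of a maximum is preserved, and $g=\phi\circ f$ is a bounded sleek map whenever $f$ is sleek. I do not anticipate a genuine obstacle here; the only point requiring care is the elementary observation, used in the key step, that a strictly monotone continuous bijection preserves both the set of minimizers and the set of maximizers of any real-valued function, which is precisely what transfers the round/sleek condition from $f$ to its bounded rescaling $g$.
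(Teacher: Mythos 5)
Your proposal is correct and is essentially the paper's own argument: the paper also replaces $f$ by $\bar f(x,y)=f(x,y)/(1+f(x,y))$ and uses the monotonicity of $t\mapsto t/(1+t)$ to see that the existence of minima/maxima of $f(x,\cdot)$ on open sets is unchanged. Your write-up is just a slightly more explicit version of the same step, phrased in terms of the order isomorphism $\phi$.
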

\begin{proof}
    Let $f:X^2\rightarrow \rr$ be a continuous map. Let
    \begin{eqnarray*}
    \bar{f}(x,y)=\frac{f(x,y)}{1+f(x,y)},~\text{for all}~x,y\in X,
    \end{eqnarray*}
Clearly, $\bar{f}$ is a bounded continuous function on $X^2$. We observe that for all $x,y,u\in X$,
\begin{eqnarray*}
f(x,y)\leq f(x,u)~ \text{if and only if}~ \frac{f(x,y)}{1+f(x,y)}\leq \frac{f(x,u)}{1+f(x,u)},
\end{eqnarray*}
that is, $\bar{f}(x,y)\leq \bar{f}(x,u)$. Consequently, $f$ is not a round (resp. sleek) map for $X$ if and only if so is $\bar{f}$ for $X$, and the proposition follows.
\end{proof}
\begin{proposition}\label{p6}
Let $X_1$ and $X_2$ be topological spaces having no isolated point. For each $i=1,2$, let $f_i:X_i^2\rightarrow \rr$ be continuous. Let $F:X_1\rightarrow X_2$ be a continuous bijection  for which  $f_1(x,y)<f_1(x,z)$ implies  $f_2(F(x), F(y))<f_2(F(x),F(z))$ for $x,y,z\in X_1$. If $f_1$ is a round (resp. sleek) map for $X_1$, then $f_2$  is a round (resp. sleek) map for $X_2$.
\end{proposition}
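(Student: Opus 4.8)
The plan is to argue by contradiction in both the round and the sleek case, transporting a hypothetical extremal witness found in $X_2$ back to $X_1$ along $F$. The only features of $F$ I would use are that it is a continuous bijection. Continuity ensures that for every open $U_2\subseteq X_2$ the preimage $U_1:=F^{-1}(U_2)$ is open in $X_1$. The identity $F^{-1}(X_2\setminus U_2)=X_1\setminus F^{-1}(U_2)$ holds for any map, and surjectivity sends nonempty sets to nonempty preimages; together these show that $U_1$ is nonempty, and proper, exactly when $U_2$ is. Injectivity lets me write $x_1:=F^{-1}(x_2)$ and $y_1:=F^{-1}(y_2)$ as single points, with $x_1\in U_1\iff x_2\in U_2$ and $y_1\in U_1\iff y_2\in U_2$.

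For the round case I would suppose $f_2$ is not round for $X_2$. Then there is a proper nonempty open $U_2\subseteq X_2$, a point $x_2\in X_2\setminus U_2$, and a minimizer $y_2\in U_2$ with $f_2(x_2,y_2)\le f_2(x_2,z_2)$ for all $z_2\in U_2$. Forming $U_1,x_1,y_1$ as above, the preliminary remarks give that $U_1$ is proper, nonempty, open, that $x_1\in X_1\setminus U_1$, and that $y_1\in U_1$. I would then claim $y_1$ minimizes $f_1(x_1,\cdot)$ on $U_1$: if some $z_1\in U_1$ satisfied $f_1(x_1,z_1)<f_1(x_1,y_1)$, then applying the hypothesis to the triple $(x_1,z_1,y_1)$ would yield $f_2(x_2,F(z_1))<f_2(x_2,y_2)$ with $F(z_1)\in U_2$, contradicting the minimality of $y_2$. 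Hence $f_1(x_1,\cdot)$ attains a minimum on $U_1$, so $f_1$ is not round, contradicting the hypothesis.

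The sleek case is entirely parallel. Starting from a nonempty open $U_2$, a point $x_2\in U_2$, and a maximizer $y_2\in U_2$ of $f_2(x_2,\cdot)$, I would form $U_1,x_1,y_1$ (now with $x_1\in U_1$) and show $y_1$ maximizes $f_1(x_1,\cdot)$ on $U_1$; here the hypothesis is applied to the triple $(x_1,y_1,z_1)$, so that $f_1(x_1,y_1)<f_1(x_1,z_1)$ would force $f_2(x_2,y_2)<f_2(x_2,F(z_1))$ against the maximality of $y_2$. This contradicts the sleekness of $f_1$ and completes the argument.

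I do not anticipate a substantive obstacle; the content is essentially careful bookkeeping with $F^{-1}$. The one point demanding attention is the logical direction: the hypothesis relates $f_1$ and $f_2$ in the forward direction $X_1\to X_2$, while the conclusion also runs $X_1\to X_2$, so the proof must proceed contrapositively, pulling the extremum back along $F^{-1}$ and invoking only the forward implication, with the two arguments of $f$ in the opposite order in the round and the sleek cases. I would also remark that the no-isolated-point assumptions are not actually exploited in the transfer: by the isolated-point part of Proposition \ref{p1}\ref{p1a}, roundness or sleekness of $f_1$ already forces $X_1$ to have no isolated point, and since $F$ is a continuous bijection the same then passes to $X_2$, so those hypotheses are automatic in the only situation the proposition concerns.
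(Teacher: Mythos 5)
Your proof is correct and follows essentially the same route as the paper: assume $f_2$ fails to be round (resp.\ sleek), pull the open set and the extremal witness back along $F^{-1}$, and use the contrapositive of the order-preservation hypothesis to produce a minimum (resp.\ maximum) for $f_1$, contradicting its roundness (resp.\ sleekness). Your extra bookkeeping (properness/nonemptiness of $F^{-1}(U_2)$ and the observation that the no-isolated-point hypotheses are automatic) is sound and only makes explicit what the paper leaves implicit.
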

\begin{proof}
We prove the proposition only in the round case, since the proof is similar for the other case. Assume that $f_2$ is not a round map for $X_2$. Then there exists an open set $V\subset X_2$, $F(x)\in X_2\setminus V$, $F(y)\in V$, such that $f_2(F(x),F(y))\leq f_2(F(x),F(u))$ for all $u\in U=F^{-1}(V)$, where by the continuity of $F$, the set $U$  is open in $X_1$ such that $y\in U$ and $x\in X_1\setminus U$. By the hypothesis, $f_1(x,y)\leq f_1(x,u)$ for all $u\in U$. Consequently, $f_1$ is not a round map for $X$.
\end{proof}
\begin{corollary}\label{c2}
  For each $i=1,2$, let $X_i$ be a topological space having no isolated point, and $f_i:X_i^2\rightarrow \rr$ be continuous. If there exists a homeomorphism $F:X_1\rightarrow X_2$ and a positive real number $\kappa$ such that
  \begin{eqnarray*}
  f_2(F(x),F(y))=\kappa f_1(x,y),~\text{for all}~x,y\in X_1,
  \end{eqnarray*}
then $f_1$ is a round (resp. sleek) map for $X_1$ if and only if $f_2$ is a round (resp. sleek) map for $X_2$.
\end{corollary}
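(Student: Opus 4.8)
The plan is to deduce the corollary directly from Proposition \ref{p6}, applying that proposition to $F$ and to $F^{-1}$ in turn after checking its order-preservation hypothesis in each direction.

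First I would record the one observation that carries all the weight: since $\kappa$ is a positive real number, multiplication by $\kappa$ is strictly increasing on $\rr$, so for all $x,y,z\in X_1$,
\[
f_1(x,y)<f_1(x,z)\iff \kappa f_1(x,y)<\kappa f_1(x,z)\iff f_2(F(x),F(y))<f_2(F(x),F(z)),
\]
where the last equivalence is just the defining relation $f_2(F(x),F(y))=\kappa f_1(x,y)$. In particular the forward implication required by Proposition \ref{p6} holds. Because $F$ is a homeomorphism it is a continuous bijection, so Proposition \ref{p6} applies verbatim and gives: if $f_1$ is round (resp. sleek) for $X_1$, then $f_2$ is round (resp. sleek) for $X_2$.

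For the converse I would run the same argument with the roles of the spaces interchanged, using $G=F^{-1}:X_2\to X_1$, which is again a continuous bijection since $F$ is a homeomorphism. The only thing to verify is the order-preservation hypothesis for $G$. Given $u,v,w\in X_2$, surjectivity of $F$ lets me write $u=F(x)$, $v=F(y)$, $w=F(z)$ with $x=G(u)$, $y=G(v)$, $z=G(w)$; reading the displayed biconditional from right to left then yields
\[
f_2(u,v)<f_2(u,w)\iff f_1(G(u),G(v))<f_1(G(u),G(w)),
\]
so the forward implication needed by Proposition \ref{p6} holds for $G$. Applying the proposition to $G$ shows that if $f_2$ is round (resp. sleek) for $X_2$, then $f_1$ is round (resp. sleek) for $X_1$, completing the biconditional.

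I do not expect a genuine obstacle here, since all the substantive content already resides in Proposition \ref{p6}. The corollary only records that scaling by a positive constant preserves strict order, which makes the hypothesis of Proposition \ref{p6} symmetric between $X_1$ and $X_2$. The sole piece of bookkeeping is the reindexing through the bijection $F$ in the converse direction, and this is routine precisely because $F$ is surjective.
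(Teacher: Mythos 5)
Your proposal is correct and follows essentially the same route as the paper: both establish that the relation $f_2(F(x),F(y))=\kappa f_1(x,y)$ with $\kappa>0$ makes the strict-order condition of Proposition \ref{p6} hold as a biconditional, and then apply that proposition to $F$ and to $F^{-1}$. Your extra bookkeeping for the converse direction (rewriting points of $X_2$ as images under $F$) is just a more explicit rendering of what the paper leaves implicit.
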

\begin{proof} 
For $x,y,z\in X_1$, we have
      \begin{eqnarray*}
  f_2(F(x),F(y))-f_2(F(x),F(z))=\kappa (f_1(x,y)-f_1(x,z)),
  \end{eqnarray*}
  which shows that $f_1(x,y)<f_1(x,z)$ if and only if $f_2(F(x),F(y))<f_2(F(x),F(z))$. Now applying Proposition \ref{p6} for the continuous bijections $F$ and $F^{-1}$, we have  the desired result.
\end{proof}
\begin{proposition}\label{p2} Let $X$ be a topological space, and let $f:X^2\rightarrow \rr$ be a continuous map.
\begin{enumerate}[label=(\alph*)]
  \item\label{p2a} If $f$ is a round   map for $X$, then so is $f$ for each proper nonempty open subset of $X$. The converse holds if $X$ is Hausd\"orff.
  \item\label{p2b} If $f$ is a sleek   map for $X$, then so is $f$ for each proper nonempty open subset of $X$. The converse holds if $X$ is a $T_1$-space.
\end{enumerate}
\end{proposition}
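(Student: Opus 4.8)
The plan is to treat parts \ref{p2a} and \ref{p2b} in parallel, since both split into an unconditional forward implication and a converse that needs a separation axiom, and to prove each converse by contraposition. Throughout I will lean on the single observation that an open subset of an open subspace of $X$ is again open in $X$, so that openness, nonemptiness, and (for the round case) properness of a test set all transfer between $X$ and a proper open subspace $W$; moreover, since the relevant base point and the test set both lie in $W$, the values of $f$ are literally unchanged upon restriction to $W^2$.

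For the forward implications I would argue directly. Given a proper nonempty open $W\subseteq X$ and a test set $U'$ that is (proper) nonempty open in $W$, the set $U'$ is also nonempty open in $X$; in the round case $U'\subseteq W\subsetneq X$ forces $U'\neq X$, so $U'$ is proper in $X$ as well. Feeding $U'$, together with the relevant base point (which lies in $W\setminus U'$ in the round case and in $U'$ in the sleek case), into the hypothesis that $f$ is round (resp. sleek) for $X$ yields the absence of a minimum (resp. maximum), which is exactly what is required for $W$. No separation hypothesis enters here.

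The converses are the heart of the matter, and I would prove them contrapositively. Suppose first that $f$ fails to be round for $X$: there are a proper nonempty open $U\subseteq X$, a point $x\in X\setminus U$, and a minimizer $y_0\in U$ of $f(x,\cdot)$ on $U$, with $x\neq y_0$. The goal is to manufacture a proper open subspace on which roundness already fails. Using that $X$ is Hausdörff, so that points are closed, I would excise a single point $z\notin\{x,y_0\}$ and set $W:=X\setminus\{z\}$, which is then a proper open set containing both $x$ and $y_0$. Then $U\cap W$ is nonempty open in $W$, is proper in $W$ because $x\in W\setminus U$, satisfies $x\in W\setminus(U\cap W)$, and still has $y_0$ as a minimizer of $f(x,\cdot)$ since $U\cap W\subseteq U$; hence $f$ is not round for $W$. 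The sleek converse is analogous but slightly easier: from a failure of sleekness one gets a nonempty open $U$, a point $x\in U$, and a maximizer $y_0\in U$. If $U\subsetneq X$ then $U$ itself is the required proper open witness; if $U=X$ I would excise a point $z$ distinct from $x$ (and, when $x\neq y_0$, from $y_0$, so as to keep the maximum) to obtain a proper open $W$ on which sleekness still fails, and here only the closedness of the singleton $\{z\}$ is used, so $T_1$ suffices.

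The main obstacle is precisely this localization step in the converse: one must cut $X$ down to a proper open subspace without destroying the extremum or expelling the base point, and the separation axiom is exactly the tool that keeps the punctured set open. This also accounts for the asymmetry between the two parts: in the sleek case the offending open set can frequently be used verbatim, which relaxes the requirement to $T_1$, whereas in the round case the base point necessarily lies outside the test set, forcing one to retain two prescribed points inside $W$ and thereby making the separation hypothesis the delicate ingredient.
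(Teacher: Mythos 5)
Your forward implications and your converse of (b) match the paper's argument in substance: the paper states the forward direction contrapositively (a minimum, resp.\ maximum, of $f(p,\cdot)$ on an open subset $P\cap Q$ of a proper nonempty open $P$ is already one on an open subset of $X$), and for the converse of (b) it likewise punctures the offending set at a third point $z\in U\setminus\{x,y\}$. Where you genuinely diverge is the converse of (a). Given a failure of roundness witnessed by $U$, $x\in X\setminus U$ and a minimizer $y\in U$, the paper uses Hausdorffness to produce an open $V\ni x$ disjoint from $U$, picks $z\in V\setminus\{x\}$, and exhibits the failure inside $W=U\cup V\setminus\{z\}$; you instead puncture the whole space, taking $W=X\setminus\{z\}$ for a single $z\notin\{x,y\}$ and restricting the witness to $U\cap W$. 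Your route is simpler and buys something real: it uses only that singletons are closed, so it shows the Hausdorff hypothesis in (a) can be relaxed to $T_1$; and it sidesteps the paper's separation step, which is delicate as written, since an open $V\ni x$ disjoint from $U$ need not exist when $x\in\overline{U}$ (Hausdorffness separates points from points, not points from open sets omitting them), whereas your excision of a closed singleton works unconditionally. The one detail you should make explicit is the existence of the auxiliary point $z\notin\{x,y\}$ (and likewise in the sleek case when $U=X$): the paper secures it by first reducing, via Proposition \ref{p1}\ref{p1a}, to the case where $X$ has no isolated point, and a $T_1$ space with at least two points and no isolated point is infinite; without some such remark your argument is silent on two-point spaces, which is exactly the degenerate case both proofs must exclude.
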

\begin{proof} In view of Proposition \ref{p1}, we can assume without loss of generality that $X$ has no isolated point.

$(a)$ If there exists a nonempty proper open subset $P$ of $X$ such that $f$ is not a round map for $P$,  then there exists an open subset $Q$ of $X$ with $p\in P\setminus Q$ and $q\in P\cap Q$ for which $f(p,q)\leq f(p,r)$ for all $r\in P\cap Q$. Consequently, the map $f(p,\cdot):P\cap Q\rightarrow \rr$ has the minimum value $f(p,q)$ on the open subset $P\cap Q$ of $X$.  So, $f$ is not a round map for $X$.

To prove the converse, we assume that $X$ is Hausd\"orff. Let there be an open subset $U$ of $X$, $x\in X\setminus U$, and $y\in U$ for which the following holds:
\begin{eqnarray}\label{e1}
f(x,y)\leq f(x,u)~\text{for all}~u\in U.
\end{eqnarray}
Since $X$ is Hausd\"orff, we can choose an open subset $V$ of $X$ containing the point $x$ such that $V$ is disjoint from $U$. Since $X$ has no isolated point, there exists $z\in V$ with $z\neq x$. In view of the fact that $X$ is  Hausd\"orff, the set $\{x\}$ is closed in $X$. Consequently, $W:=U\cup V\setminus \{z\}$ is a proper nonempty open subset of  $X$. Observe that $U$ is a nonempty proper open subset of $W$ for which $x\in W\setminus U$ and the continuous map $f(x,\cdot): U\rightarrow \rr$ satisfies \eqref{e1}, and so, $f$ is not a round map for $W$.

$(b)$ If there exists a nonempty proper open subset $P$ of $X$ such that $f$ is not a sleek map for $P$,  then there exists an open subset $Q$ of $X$ and $p,q\in P\cap Q$ for which $f(p,r)\leq f(p,q)$ for all $r\in P\cap Q$. Consequently, the map $f(p,\cdot):P\cap Q\rightarrow \rr$ has the maximum value $f(p,q)$ on the open subset $P\cap Q$ of $X$, and so, $f$ is not a sleek map for $X$.

Conversely, let $X$ be a $T_1$-space. Let there be an open subset $U$ of $X$, $x,y\in U$ for which the following holds:
\begin{eqnarray}\label{e2}
f(x,u)\leq f(x,y)~\text{for all}~u\in U.
\end{eqnarray}
Since $X$ is a $T_1$-space having no isolated point, the set $U\setminus\{x,y\}$ is a nonempty proper open subset of $X$. Let $z\in U\setminus\{x,y\}$. Then $U\setminus\{z\}$ is a nonempty proper open subset of $X$
 for which the map $f(x,\cdot): U\setminus \{z\}\rightarrow \rr$ satisfies \eqref{e2}, and so, $f$ is not a sleek map for $U\setminus \{z\}$.
\end{proof}
\begin{proposition}
    A dense subspace of a round (resp. sleek) topological space is round (resp. sleek).
\end{proposition}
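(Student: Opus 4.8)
The plan is to show that the restriction of a round (resp. sleek) map to the dense subspace is again a round (resp. sleek) map. So let $D$ be a dense subspace of a round topological space $X$, and let $f:X^2\to\rr$ be a round map for $X$. I would take $g=f|_{D^2}:D^2\to\rr$, which is continuous as a restriction of a continuous map, and claim that $g$ is a round map for $D$.

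First I would argue by contradiction: suppose $g$ is not round for $D$. Then there exist a proper nonempty open set $U$ in $D$, a point $x\in D\setminus U$, and a point $y\in U$ with $f(x,y)\le f(x,u)$ for all $u\in U$. Writing $U=D\cap V$ for some open $V$ in $X$, I note that $x\in D\setminus U\subseteq X\setminus V$ and $y\in U\subseteq V$, so $V$ is a nonempty (it contains $y$) and proper (it omits $x$) open subset of $X$ with $x\in X\setminus V$.

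The key step is to promote the minimality inequality from the dense set $U=D\cap V$ to all of $V$. For this I would first record that $D\cap V$ is dense in $V$: if $v\in V$ and $O$ is a neighbourhood of $v$ in $V$, then $O$ is open in $X$ and meets $D$ by density, so $O$ meets $D\cap V$. Hence for each $v\in V$ there is a net $(u_\lambda)$ in $D\cap V$ converging to $v$, and by continuity of $f(x,\cdot)$ we get $f(x,u_\lambda)\to f(x,v)$. Since $f(x,u_\lambda)\ge f(x,y)$ for every $\lambda$ and the set $\{t\in\rr:t\ge f(x,y)\}$ is closed, the limit satisfies $f(x,v)\ge f(x,y)$. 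Thus $f(x,\cdot):V\to\rr$ attains the minimum value $f(x,y)$ at $y\in V$, while $x\in X\setminus V$; this contradicts $f$ being a round map for $X$, so $g$ must be round for $D$.

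The sleek case is entirely parallel: starting from a failure of sleekness of $g=f|_{D^2}$ on a nonempty open $U=D\cap V\subseteq D$ with $x\in U$ and a maximizer $y\in U$, one has $x\in U\subseteq V$, and the same net argument (now using that $\{t\in\rr:t\le f(x,y)\}$ is closed) propagates the inequality $f(x,u)\le f(x,y)$ to all $u\in V$, producing a maximum of $f(x,\cdot)$ on the nonempty open set $V$ with $x\in V$ and contradicting sleekness of $f$ for $X$. The main obstacle is precisely this density-to-everywhere propagation: because $X$ is an arbitrary topological space one cannot rely on sequences and must argue with nets (equivalently, with the closedness of the relevant super/sub-level sets under limits), and one must keep track that the open set $V$ produced in $X$ is genuinely proper and nonempty so that it is an admissible witness in the definition of round (resp. sleek) map.
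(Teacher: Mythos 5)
Your proof is correct and follows essentially the same strategy as the paper: restrict the round (resp.\ sleek) map $f$ to the dense subspace, and if roundness fails there on a trace $U=D\cap V$, propagate the extremal inequality from the dense subset $D\cap V$ to all of $V$ by continuity, contradicting roundness of $f$ on $X$. The one substantive difference is in how that propagation is carried out: the paper extracts a \emph{sequence} in $Y\cap O$ converging to an arbitrary point of $O$, which is not justified in an arbitrary topological space (a point of the closure of a set need not be a sequential limit from that set), whereas you use nets, or equivalently the observation that the closed set $\{u\in X : f(x,u)\ge f(x,y)\}$ contains the dense subset $D\cap V$ of $V$ and hence contains $V$. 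Your version is therefore the more careful one at the level of generality the proposition is stated in; you also correctly check that the witness $V$ is a nonempty proper open subset of $X$ (it contains $y$ and omits $x$), a point the paper passes over.
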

\begin{proof}We will prove the Proposition only for the case of roundness, since the proof is similar for the other case.
So, let $Y$ be a dense subspace of a round topological space $X$. Let $f$ be a round map for $X$. If possible, suppose that $f$ is not a round map for $Y$. Then there exists an open set $O$ in $X$, $x\in Y\setminus O$, and $y\in Y\cap O$ for which $f(x,y)\leq f(x,u)$ for all $u\in Y\cap O$. Let $z\in O$. Since $Y$ is dense in $X$, we must have $\overline{Y\cap O}=\bar{O}$, and so, $z\in \overline{Y\cap O}$. Consequently, there exists a sequence $(z_n)$ of points of $Y\cap O$ converging to $z$. By the continuity of $f$,  we have $f(x,z_n)\rightarrow f(x,z)$ as $n\rightarrow \infty$, where $f(x,y)\leq f(x,z_n)$, since $z_n\in Y\cap O$ for all $n$. We then have $f(x,y)\leq f(x,z)$. Thus, the map $f(x,\cdot): O\rightarrow \rr$ has the minimum value $f(x,y)$ where $y\in O$ and $x\in X\setminus O$. This shows that $f$ is not a round map for $X$, a contradiction.
\end{proof}
\begin{definition}
Let $X$ be a topological space and $f:X^2\rightarrow \rr$ be a continuous map satisfying $f(x,x)=0$ for all $x\in X$. For  $x\in X$ and $r>0$, we define the open ball $B_f(x,r)$ and the closed ball $B_f[x,r]$ centered at $x$ and radius $r\geq 0$ in $X$ as follows:
\begin{eqnarray*}
B_f(x,r)=\{y\in X~\mid~f(x,y)<r\};~B_f[x,r]=\{y\in X~\mid~f(x,y)\leq r\}.
\end{eqnarray*}
We observe that $B_f(x,0)=\emptyset$. Also,  $B_f[x,0]=\{y\in X~|~f(x,y)=0\}$, which is closed by the continuity of $f(x,\cdot)$. The balls $B_f(x,0)$ and $B_f[x,0]$  will be called degenerate balls and the remaining other balls in $X$ will be called non-degenerate balls. For $r>0$, using the continuity of $f(x,\cdot)$ again, we have
\begin{eqnarray*}
B_f(x,r)=f(x,\cdot)^{-1}([0,r)),
\end{eqnarray*}
which is an open set containing $x$ in $X$. Similarly, the closed ball $B_f[x,r]$ is closed in $X$. We also denote by $\bar{B}_f(x,r)$, the closure of the open ball $B_f(x,r)$ and $B_f^{\circ}[x,r]$ as the interior of the closed ball $B_f[x,r]$ in $X$.

We observe that unlike in metrizable spaces, the collection of all nonempty open balls in $X$ need not be a basis for a topology on $X$.
\end{definition}
Our next result, that is, Theorem \ref{p3} below establishes a connection between the round property (resp. the sleek property) with the equality between ``closure of each non-degenerate open ball and the corresponding open ball'' (resp. ``interior of each non-degenerate closed ball and the corresponding open ball'') in a given topological space.
\begin{proposition}\label{p3}
  Let $X$ be a topological space having at least two points. Let there be a continuous map $f:X^2\rightarrow \rr$ such that $f(x,x)=0$ for each $x\in X$. 
  \begin{enumerate} [label=(\alph*)]
    \item\label{p3a} If $f$ is a round map for $X$, then $\bar{B}_f(x,r)=B_f[x,r]$ for all $x\in X$ and  $r>0$. The converse holds if $f(x,y)=0$ for   $x,y\in X$ implies $x=y$.
    \item\label{p3b} The map $f$ is a sleek map for $X$ if and only if  $B_f^\circ[x,r]=B_f(x,r)$ for all $x\in X$ and  $r>0$.
  \end{enumerate}
\end{proposition}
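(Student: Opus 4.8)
The plan is to recast each equivalence as a pair of implications and to isolate the two automatic inclusions first, so that only one genuine inclusion remains in each part. For any continuous $f$ with $f(x,x)=0$ and any $r>0$, the inclusion $B_f(x,r)\subseteq B_f[x,r]$ together with the closedness of $B_f[x,r]$ gives $\bar{B}_f(x,r)\subseteq B_f[x,r]$ for free, and the openness of $B_f(x,r)$ gives $B_f(x,r)\subseteq B_f^{\circ}[x,r]$ for free. Hence in \ref{p3a} the only content is $B_f[x,r]\subseteq\bar{B}_f(x,r)$, and in \ref{p3b} it is $B_f^{\circ}[x,r]\subseteq B_f(x,r)$; I would prove each remaining inclusion, and each converse, by contradiction. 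Throughout, the single recurring fact is that $x\in B_f(x,r)$ whenever $r>0$, since $f(x,x)=0<r$.

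For the forward direction of \ref{p3a} I would argue contrapositively. If $B_f[x,r]\neq\bar{B}_f(x,r)$ for some $x$ and some $r>0$, pick $y\in B_f[x,r]\setminus\bar{B}_f(x,r)$. Every point $u$ with $f(x,u)<r$ already lies in $B_f(x,r)\subseteq\bar{B}_f(x,r)$, so necessarily $f(x,y)=r$; and since $y\notin\bar{B}_f(x,r)$ there is an open $O\ni y$ with $O\cap B_f(x,r)=\emptyset$, i.e. $f(x,\cdot)\geq r=f(x,y)$ throughout $O$. Thus $f(x,\cdot)$ attains a minimum on $O$ at $y$. It remains to check that $O$ is an admissible witness against roundness: $O$ is nonempty, and $x\in B_f(x,r)$ together with $O\cap B_f(x,r)=\emptyset$ forces $x\in X\setminus O$ and $O\neq X$. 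This contradicts $f$ being round. For the converse I would assume the ball identity and the separation hypothesis and suppose $f$ is not round: there are a proper nonempty open $U$, a point $x\in X\setminus U$, and $y\in U$ with $f(x,y)\leq f(x,u)$ for all $u\in U$; put $r=f(x,y)$. Since $x\notin U\ni y$ we have $x\neq y$, and the separation hypothesis upgrades this to $r>0$, which is exactly what lets me invoke the identity at radius $r$. Now $U\cap B_f(x,r)=\emptyset$ while $y\in B_f[x,r]=\bar{B}_f(x,r)$, contradicting that the open set $U$ is a neighborhood of $y$ missing $B_f(x,r)$.

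For part \ref{p3b} the same skeleton applies, but the forward direction needs one extra move. Suppose $B_f^{\circ}[x,r]\neq B_f(x,r)$ for some $r>0$ and pick $y\in B_f^{\circ}[x,r]\setminus B_f(x,r)$; as before $f(x,y)=r$, and there is an open $O$ with $y\in O\subseteq B_f[x,r]$, so $f(x,\cdot)\leq r$ on $O$. The difficulty is that sleekness concerns open sets containing the \emph{first} argument $x$, whereas $O$ is a neighborhood of $y$. I would fix this by enlarging to $W=O\cup B_f(x,r)$, which is open, contains $x$, still satisfies $f(x,\cdot)\leq r=f(x,y)$ on all of $W$, and contains $y$; hence $f(x,\cdot)$ attains a maximum on $W$ at $y$, contradicting sleekness. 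For the converse I would assume the ball identity and suppose $f$ is not sleek: there are a nonempty open $U$, $x\in U$, and $y\in U$ with $f(x,u)\leq f(x,y)=:r$ for all $u\in U$. When $r>0$ this closes immediately, since $U\subseteq B_f[x,r]$ is open and contains $y$, so $y\in B_f^{\circ}[x,r]=B_f(x,r)$, forcing $f(x,y)<r$, a contradiction.

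The step I expect to be the main obstacle is controlling the radius, i.e. guaranteeing it is positive so that the hypotheses, stated only for $r>0$ (the non-degenerate balls), actually apply. In \ref{p3a} this is precisely the role of the condition ``$f(x,y)=0\Rightarrow x=y$'': without it the extremal value $r=f(x,y)$ could be $0$ and the identity could not be invoked. The parallel delicate point in the converse of \ref{p3b} is the residual case $r=0$, where $f(x,\cdot)$ vanishes on a neighborhood of $x$; the constant map $f\equiv 0$ (which satisfies $B_f^{\circ}[x,r]=B_f(x,r)=X$ for every $r>0$ yet is not sleek) shows that this degenerate configuration must be isolated at the outset and excluded by, or absorbed into, the hypotheses before the contradiction above can be run. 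The only other obstacle is organizational: in each forward direction one must verify that the auxiliary open set ($O$ in \ref{p3a}, $W$ in \ref{p3b}) is a legitimate witness — nonempty, proper, and on the correct side of $x$ — all of which reduce to the observation $x\in B_f(x,r)$ for $r>0$.
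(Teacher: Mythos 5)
Your argument follows essentially the same route as the paper's proof: both directions of \ref{p3a} and the forward direction of \ref{p3b} are argued contrapositively, with the extremal value forced to equal the radius and the ball identity invoked at $r=f(x,y)$. The only structural difference is in the forward direction of \ref{p3b}: you enlarge the neighborhood $O$ of $y$ to $W=O\cup B_f(x,r)$ so that it contains $x$, whereas the paper simply takes the witness set to be $B_f^{\circ}[x,r]$ itself, which already contains $x$ because $x\in B_f(x,r)\subseteq B_f^{\circ}[x,r]$; both are correct, and the paper's choice is marginally shorter. Likewise, in the forward direction of \ref{p3a} the paper fixes the concrete open set $X\setminus\bar{B}_f(x,r)$ where you allow an arbitrary neighborhood of $y$ missing $B_f(x,r)$ --- no real difference.

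The one place where you stop short --- the residual case $r=f(x,y)=0$ in the converse of \ref{p3b} --- is in fact a genuine defect of the statement, not merely of your write-up, and your counterexample is valid: for $f\equiv 0$ one has $B_f^{\circ}[x,r]=B_f(x,r)=X$ for every $r>0$, yet $f(x,\cdot)$ is constant on every open set and so attains a maximum, hence $f$ is not sleek. The paper's own proof disposes of this case by asserting that $f(x,y)=0$ forces $B_f[x,f(x,y)]=\{x\}$; but $B_f[x,0]=\{u\in X\mid f(x,u)=0\}$, and the claim that this set is the singleton $\{x\}$ requires precisely the separation hypothesis ``$f(x,u)=0$ implies $u=x$'' that part \ref{p3b} does not assume (it is stated only for the converse in \ref{p3a}). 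So the ``if'' direction of \ref{p3b} is false as written; it becomes true once the separation hypothesis is added to \ref{p3b} (or, alternatively, once the ball identity is required to hold for the degenerate closed balls $B_f[x,0]$ as well, which $f\equiv 0$ violates). You correctly isolated the exact point at which the published argument breaks; to finish, you should state the needed extra hypothesis explicitly rather than leaving it as an unresolved obstacle.
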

\begin{proof} We prove the contrapositive of each of the given statements. Assume without loss of generality that $X$ has no isolated point.

$(a)$ If there exists $x\in X$ and $r>0$ for which $\bar{B}_f(x,r)\neq B_f[x,r]$, then there exists $y\in B_f[x,r]$ for which $d(x,y)=r$ but $y\not\in \bar{B}_f(x,r)$. So, $U=X\setminus \bar{B}_f(x,r)$ is an open set containing $y$ such that $x\in (X\setminus U)=\bar{B}_f(x,r)$ for which $f(x,y)\leq f(x,u)$ for all $u\in U$, and so, $f$ is not a round map for $X$. Conversely, assume that $f$ is not a round map for $X$. Then there exists an open set $U$ in $X$, $x\in X\setminus U$, and $y\in U$ for which $f(x,y)\leq f(x,u)$ for all $u\in U$. Since $x\neq y$, by the hypothesis  $f(x,y)>0$ so that $B_f(x,f(x,y))\neq\emptyset$. We observe that $B_f(x,f(x,y))\cap U=\emptyset$, which shows that  $y\not\in \bar{B}_f(x,f(x,y))$. Consequently, we have $\bar{B}_f(x,f(x,y))\neq B_f[x,f(x,y)]$.

$(b)$ Now suppose there exists $x\in X$ and $r>0$ for which $B_f^\circ[x,r]\neq B_f(x,r)$. Then there exists $y\in B_f^\circ [x,r]$ for which $d(x,y)=r$. Consequently, $B_f^\circ [x,r]$ is an open set containing $x$ and $y$ such that $f(x,u)\leq f(x,y)$ for all $u\in B_f^\circ [x,r]$. So, $f$ is not a sleek map for $X$. Conversely, assume that $f$ is not a sleek map for $X$. Then there exists an open set $U$ in $X$, $x,y\in U$ for which $f(x,u)\leq f(x,y)$ for all $u\in U$, and so,  $U\subseteq B_f[x,f(x,y)]$. Observe that $f(x,y)>0$, otherwise if $f(x,y)=0$, then $B_f[x,f(x,y)]=\{x\}$, and so, $\emptyset \neq U\subseteq \{x\}$, that is, $U=\{x\}$, which contradicts the fact that $X$ has no isolated point. So, we must have $f(x,y)>0$. Consequently, by the hypothesis we have  $x\neq y$. Since $y\in U$ and $U$ is an open set such that $U\subseteq B_f[x,f(x,y)]$, we have $y\in B_f^{\circ}[x,f(x,y)]$ whereas $y\not\in B_f(x,f(x,y))$. So, we have $B_f^{\circ}[x,f(x,y)]\neq B_f(x,f(x,y))$.
\end{proof}
\begin{proposition}\label{p8}
If $f$ is a round map for a $T_1$-space $X$, then for $x,y\in X$, $f(x,y)=0$ implies $x=y$.
\end{proposition}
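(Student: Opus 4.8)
The plan is to argue by contradiction, exploiting the fact that $f$ takes values in $\rr=[0,\infty)$, so that $0$ is automatically the global minimum of any slice $f(x,\cdot)$. Suppose, contrary to the claim, that there exist distinct points $x,y\in X$ with $f(x,y)=0$. My goal is then to manufacture a proper nonempty open set $U$ with $x\in X\setminus U$ on which $f(x,\cdot)$ attains a minimum, thereby contradicting the definition of a round map.

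The key observation is that the $T_1$ hypothesis supplies exactly the open set needed. Since $X$ is $T_1$, the singleton $\{x\}$ is closed, so $U:=X\setminus\{x\}$ is open. Because $x\neq y$ we have $y\in U$, so $U$ is nonempty, and since $x\notin U$ the set $U$ is proper; moreover $x\in X\setminus U$, exactly as required by the round condition.

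It then remains to check that $f(x,\cdot)$ has a minimum on $U$. As $f$ is $\rr$-valued, we have $f(x,u)\geq 0=f(x,y)$ for every $u\in U$, and $y\in U$, so $y$ realizes the minimum value $0$ of $f(x,\cdot)$ on $U$. This contradicts the assumption that $f$ is a round map for $X$, and the proposition follows.

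I expect no serious obstacle here: the entire content is the pairing of the nonnegativity of $f$ (which makes $0$ a global minimum) with the $T_1$ axiom (which guarantees that $X\setminus\{x\}$ is an admissible open set in the round condition). The only point needing a moment's care is the verification that $U$ is simultaneously nonempty and proper, which is precisely where the distinctness $x\neq y$ is used.
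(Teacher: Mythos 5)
Your proof is correct and follows essentially the same route as the paper: both argue by contradiction, use the $T_1$ axiom to produce an open set containing $y$ but not $x$ (you take the specific choice $U=X\setminus\{x\}$, the paper takes an arbitrary such neighborhood of $y$), and then observe that the nonnegativity of $f$ makes $f(x,y)=0$ a minimum of $f(x,\cdot)$ on that set, contradicting roundness.
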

\begin{proof}
Assume the contrary that $f(x,y)=0$ for $x\neq y$. Since, $X$ is a $T_1$-space, there exists an open set $V$ containing $y$ such that $V$ does not contain $x$.  Then the map $f(x,\cdot): V\rightarrow \rr$ has the minimum value $f(x,y)=0$, which shows that $f$ is not a round map for $X$. This contradicts the hypothesis.
\end{proof}
The converse of Proposition \ref{p8} is not true. To see this, we consider the subspace  $X=[0,1] \cup [2,3]$ of real line. Clearly, $X$ is a $T_1$-space. Now if we take $f(x,y)=|x-y|$ for all $x,y\in X$, then $f(x,y)=0$ implies $x=y$ but by Proposition \ref{p1}\ref{p1b}, we see that $f$ is not a round map for $X$.
\begin{corollary}\label{c3}
Let $X$ be a $T_1$-space, and let $f:X^2\rightarrow \rr$ be a continuous map satisfying $f(x,x)=0$ for all $x\in X$. Then $f$ is a round map for $X$ if and only if $\bar{B}_f(x,r)=B_f[x,r]$ for all $x\in X$ and $r>0$.
\end{corollary}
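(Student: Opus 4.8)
The plan is to obtain this from the two preceding results, Proposition~\ref{p3}\ref{p3a} and Proposition~\ref{p8}. The forward implication costs nothing: if $f$ is a round map for $X$, then the forward half of Proposition~\ref{p3}\ref{p3a} already yields $\bar{B}_f(x,r)=B_f[x,r]$ for all $x\in X$ and all $r>0$, and the $T_1$ hypothesis is not even needed here.

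For the converse I would aim to apply the converse half of Proposition~\ref{p3}\ref{p3a}, which promotes the ball equality to roundness, but only under the separation condition that $f(x,y)=0$ forces $x=y$. Securing this condition is the whole game. The natural tool is Proposition~\ref{p8}, yet as stated it yields separation only once $f$ is already known to be round; in the converse direction we have merely the ball equality in hand, so Proposition~\ref{p8} cannot be quoted directly and the $T_1$ axiom has to be made to deliver separation from the ball equality itself.

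To see exactly where the work lies, I would argue the converse contrapositively. Supposing $f$ is not round, there are an open set $U$, a point $x\in X\setminus U$, and $y\in U$ with $f(x,y)\le f(x,u)$ for all $u\in U$, and necessarily $x\ne y$. If $f(x,y)>0$, the computation is routine and mirrors the converse in Proposition~\ref{p3}\ref{p3a}: the nonempty ball $B_f(x,f(x,y))$ is disjoint from the open neighbourhood $U$ of $y$, so $y\notin\bar{B}_f(x,f(x,y))$ while $y\in B_f[x,f(x,y)]$, and the ball equality fails at the positive radius $r=f(x,y)$.

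The main obstacle is precisely the degenerate case $f(x,y)=0$ with $x\ne y$, where the radius $f(x,y)$ is useless. Here one has no choice but to invoke the $T_1$ hypothesis, and the crux of the whole corollary is to show that in a $T_1$ space the equality $\bar{B}_f(x,r)=B_f[x,r]$ for all $r>0$ rules this configuration out, i.e.\ forces $f(x,y)=0\Rightarrow x=y$. I expect this separation step to be the decisive and genuinely delicate point; once it is established, the converse half of Proposition~\ref{p3}\ref{p3a} finishes the proof.
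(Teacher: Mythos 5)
Your proposal does not close the proof, and you have put your finger on exactly the spot where it cannot be closed. The forward direction and the case $f(x,y)>0$ of the converse are handled correctly and are exactly what Proposition \ref{p3}\ref{p3a} delivers. The paper's own justification is the one-line ``Follows from Propositions \ref{p3}\ref{p3a} and \ref{p8}'', which is circular in the converse direction for precisely the reason you identify: Proposition \ref{p8} yields the separation condition $f(x,y)=0\Rightarrow x=y$ only from the hypothesis that $f$ is already round, which is the conclusion being sought. So your diagnosis of where the work lies is sharper than the paper's argument; but a diagnosis is not a proof, and your write-up explicitly leaves the decisive step unestablished.

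The deferred step --- that in a $T_1$-space the equality $\bar{B}_f(x,r)=B_f[x,r]$ for all $x$ and $r>0$ forces $f(x,y)=0\Rightarrow x=y$ --- is not merely delicate; it is false, and with it the converse of the corollary as stated. Take $X=\mathbb{R}$ with the usual topology (a $T_1$-space with no isolated points) and $f(x,y)=\min_{n\in\mathbb{Z}}|x-y-n|$, the distance from $x-y$ to $\mathbb{Z}$. Then $f$ is continuous and $f(x,x)=0$, and the ball equality holds for every $x$ and every $r>0$: for $r\geq 1/2$ the closed ball is all of $\mathbb{R}$ and the open ball is dense, while for $0<r<1/2$ the open ball is $\bigcup_{m\in\mathbb{Z}}(x+m-r,x+m+r)$, whose closure is $\bigcup_{m\in\mathbb{Z}}[x+m-r,x+m+r]=B_f[x,r]$. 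Yet $f(0,1)=0$ with $0\neq 1$, and $f$ is not round, since $U=(1/2,3/2)$ is open, $0\notin U$, and $f(0,\cdot)$ attains the minimum value $0$ at $1\in U$. (A two-point discrete space with $f\equiv 0$ gives the same failure even more cheaply, and shows that the ``assume no isolated point'' reduction invoked in Proposition \ref{p3} does not rescue the converse.) Hence no argument can supply the missing separation step: the corollary's converse requires the condition ``$f(x,y)=0$ implies $x=y$'' as an explicit additional hypothesis, as in Proposition \ref{p3}\ref{p3a}, rather than as a consequence of $T_1$.
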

\begin{proof}
 Follows from Propositions \ref{p3}\ref{p3a} and \ref{p8}.
\end{proof}
\begin{proposition}\label{p9}
Let $X$ be a topological space and $f:X^2\rightarrow \rr$ be a continuous map.
For an index set $J$, let $(Y_\alpha)_{\alpha\in J}$ be a family of subspaces of $X$, such that $f$ is  a round (resp. sleek) map for the subspace $Y_\alpha\cup Y_\beta$ for all $\alpha,\beta\in J$. Then $f$ is a round (resp. sleek) map for $\cup_{\alpha\in J}Y_\alpha$.
\end{proposition}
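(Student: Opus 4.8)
The plan is to prove the contrapositive in each case, exploiting the fact that a failure of roundness (or sleekness) is witnessed by only two points: the centre $x$ and the extremizer $y$. Since each of these two points lies in some member of the family, the entire failure can be transplanted into a single pairwise union $Y_\alpha\cup Y_\beta$, where the hypothesis immediately gives a contradiction. I will carry out the round case in detail; the sleek case is entirely parallel.

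For the round case, suppose $f$ is not a round map for $Y:=\cup_{\alpha\in J}Y_\alpha$. Then there exist a proper nonempty open set $U$ in $Y$, a point $x\in Y\setminus U$, and a point $y\in U$ with $f(x,y)\le f(x,u)$ for all $u\in U$. Because $x,y\in Y$, I can choose indices $\alpha,\beta\in J$ with $x\in Y_\alpha$ and $y\in Y_\beta$, and set $Z:=Y_\alpha\cup Y_\beta$ together with $U':=U\cap Z$. The goal is to show that $U'$ witnesses a failure of roundness of $f$ on $Z$, contradicting the hypothesis that $f$ is round for every pairwise union.

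The verification reduces to a few routine checks. First, $U'$ is open in $Z$: the subspace topology on $Z$ inherited from $X$ agrees with the one inherited from $Y$ by transitivity of the subspace topology, so $U\cap Z$ is open in $Z$. Next, $U'$ is nonempty, since $y\in Y_\beta\subseteq Z$ and $y\in U$; and it is proper with $x$ in its complement, since $x\in Y_\alpha\subseteq Z$ while $x\notin U$, whence $x\in Z\setminus U'$ (in particular $x\ne y$, so $Z$ has at least two points). Finally, as $U'\subseteq U$, the inequality $f(x,y)\le f(x,u)$ persists for all $u\in U'$, and $y\in U'$, so the map $f(x,\cdot)\colon U'\to\rr$ attains the minimum value $f(x,y)$. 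Hence $f$ is not a round map for $Z=Y_\alpha\cup Y_\beta$, contradicting the hypothesis and establishing the round assertion.

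The sleek case runs along the same lines: from a failure of sleekness on $Y$ one obtains a nonempty open set $U$, a point $x\in U$, and a point $y\in U$ with $f(x,u)\le f(x,y)$ for all $u\in U$; choosing $\alpha,\beta$ with $x\in Y_\alpha$ and $y\in Y_\beta$ and restricting to $Z=Y_\alpha\cup Y_\beta$ yields $U'=U\cap Z$ containing both $x$ and $y$, on which $f(x,\cdot)$ attains the maximum value $f(x,y)$. Here no properness of $U'$ is needed, since the sleek definition only requires $U$ nonempty and open. I expect no serious obstacle in this argument; the only point demanding care is confirming that restricting the extremizing open set from $Y$ down to the pairwise union keeps it open and nonempty while preserving the extremum, i.e.\ the bookkeeping of nested subspace topologies and the elementary fact that an attained minimum (or maximum) survives passage to any subset that still contains the extremizer.
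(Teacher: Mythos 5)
Your proof is correct and follows essentially the same route as the paper's: witness the failure on $Y$ by points $x,y$, pick $\alpha,\beta$ with $x\in Y_\alpha$, $y\in Y_\beta$, and restrict the extremizing open set to $Y_\alpha\cup Y_\beta$, where the extremum survives. Your write-up is in fact somewhat more careful than the paper's about the subspace-topology bookkeeping and the properness/nonemptiness checks.
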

\begin{proof} Let $Y=\cup_{\alpha\in J}Y_\alpha$. Assume that $f$ is not a round map for $Y$.
Then there exists an open set $O$ in $X$, $x\in Y\setminus O$, and $y\in Y\cap O$ for which $f(x,y)\leq f(x,u)$ for all $u\in U$ so that $x\not\in O$ and $y\in \cup_{\in J}(Y_\alpha\cap O)$. Let $\alpha,\beta\in J$ be such that $x\in Y_\alpha$ and $y\in Y\beta$. Then $y\in (Y_\alpha \cup Y_\beta)\cap O$ and $x\not\in (Y_\alpha \cup Y_\beta)\cap O$ such that $f(x,y)\leq f(x,u)$ for all $u\in (Y_\alpha \cup Y_\beta)\cap O$ since $(Y_\alpha \cup Y_\beta)\cap O\subseteq Y\cap O$. So, $f$ is not a round map for the subspace $Y$. The proof for the case of sleekness is similar, and so, we omit the proof.
\end{proof}
\begin{proposition}\label{p7}
A locally connected Hausd\"orff space $X$ is round (resp. sleek) if and only if
for every pair of distinct connected components $C_1$ and $C_2$ of $X$, the subset $C_1\cup C_2$ of $X$ is round (resp. sleek) in the subspace topology.
\end{proposition}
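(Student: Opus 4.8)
The plan is to exploit local connectedness, which makes every connected component of $X$ clopen; consequently each $C_1\cup C_2$ is an open subspace of $X$, and, crucially, $X^2$ is the \emph{topological disjoint union} of the clopen blocks $C_\alpha\times C_\beta$ indexed by pairs of components. I may assume $X$ is disconnected, since for connected $X$ there are no distinct components and the right-hand condition is vacuous. The forward implication is then immediate from Proposition \ref{p2}: a round (resp.\ sleek) map for $X$ stays round (resp.\ sleek) on the nonempty open set $C_1\cup C_2$, the case $C_1\cup C_2=X$ (exactly two components) being the hypothesis itself. The real work is the converse, where I must turn the \emph{a priori unrelated} round (resp.\ sleek) maps living on the separate pairs $C_\alpha\cup C_\beta$ into one continuous map on all of $X^2$ that is round (resp.\ sleek) for every pair at once; Proposition \ref{p9} then promotes this to roundness (resp.\ sleekness) of $X=\bigcup_\alpha C_\alpha$. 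Assembling this single global map is the main obstacle.

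To do so I would first record an elementary but decisive decomposition of the round property across a clopen pair: a continuous $f$ is round for $C_\alpha\cup C_\beta$ if and only if $f$ is round for each of $C_\alpha,C_\beta$ \emph{and} for every $x\in C_\alpha$ the map $f(x,\cdot)$ has no minimum on any nonempty open subset of $C_\beta$ (and symmetrically). The component roundness is extracted from Proposition \ref{p2}, while the cross condition is read off by taking the test open set inside $C_\beta$, where a point $x\in C_\alpha$ automatically lies outside it. Conversely, for an arbitrary open $U=U_\alpha\cup U_\beta$ and a point $x\notin U$, these two conditions prevent $f(x,\cdot)$ from attaining a minimum on either piece, and a minimizer over the union would in particular minimize over the piece it belongs to; hence no minimum on $U$. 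The feature I want to exploit is that component roundness constrains only the diagonal blocks $C_\alpha\times C_\alpha$, while the cross condition constrains only the off-diagonal blocks $C_\alpha\times C_\beta$, so the two can be prescribed independently.

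This independence lets me build $f$ block by block. Since each pair $C_\alpha\cup C_\beta$ is round, restricting its round map to the clopen subspace $C_\alpha$ gives, via Proposition \ref{p2}, a round map $g_\alpha$ for $C_\alpha$; I fix one such $g_\alpha$ per component and set $f:=g_\alpha$ on each diagonal block. On each off-diagonal block $C_\alpha\times C_\beta$ and its transpose I set $f$ equal to (the restriction of) a chosen round map $h_{\alpha\beta}$ for $C_\alpha\cup C_\beta$. Because the blocks are clopen in $X^2$, this piecewise definition is continuous, and for every pair $C_\gamma\cup C_\delta$ the component-roundness half of the decomposition holds by construction of the $g$'s, while the cross-condition half holds because the off-diagonal blocks were taken from the genuinely round $h_{\gamma\delta}$. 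Thus $f$ is round for each pair, and Proposition \ref{p9} completes the round direction.

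For the sleek case the scheme is identical but the off-diagonal difficulty evaporates, because now the offending test sets must contain their own centre. Proposition \ref{p2} again makes each component sleek, so Proposition \ref{p5} supplies a bounded sleek map $g_\alpha$ with values in $[0,1)$; I place $g_\alpha$ on the diagonal blocks and simply set $f\equiv 0$ on every off-diagonal block. For $x\in C_\alpha$ and open $U=U_\alpha\cup U_\beta$ with $x\in U_\alpha$, sleekness forces $g_\alpha(x,\cdot)$ to be nonconstant on $U_\alpha$, hence of strictly positive supremum there, so the supremum of $f(x,\cdot)$ over $U$ is dictated by the diagonal part and is not attained; Proposition \ref{p9} again concludes. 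I expect the only delicate points to be the verification of the round decomposition and the use of the no-isolated-point property (inherited from sleekness) to rule out local constancy; the connected case, having a single component and no admissible pair, degenerates and needs no separate argument.
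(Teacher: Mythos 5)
Your proof is correct in substance but follows a genuinely different, and in fact more careful, route than the paper. The paper's converse fixes a single continuous $f$ that is non-round for $X$ and localizes the failure to one pair of components via local connectedness and the Hausdorff property; read contrapositively, this only shows that \emph{one map} which is round on every pair $C_1\cup C_2$ is round on $X$. The stated hypothesis, however, provides a possibly different round map for each pair, and the paper never assembles these into a single map on $X^2$ --- precisely the obstacle you isolate. Your construction fills that gap: you observe that $X^2$ is partitioned into the open blocks $C_\alpha\times C_\beta$, prove a decomposition lemma splitting roundness of $f$ on $C_\alpha\cup C_\beta$ into roundness on each component (a diagonal-block condition) plus a cross condition (an off-diagonal-block condition), and then prescribe the blocks independently, invoking Proposition \ref{p2} to extract componentwise round (resp.\ sleek) maps and Proposition \ref{p9} to pass from all pairs to the union; the sleek case is even simpler since setting $f\equiv 0$ off the diagonal cannot create a maximum, because sleekness of $g_\alpha$ forces $\sup_{U_\alpha}g_\alpha(x,\cdot)>0$ and unattained. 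What your approach buys is an actual proof of the converse as stated; what the paper's approach buys is brevity, at the cost of tacitly strengthening the hypothesis to ``there is one $f$ round for every pair.'' One caveat applies to both arguments: when $X$ is connected the right-hand condition is vacuous, yet a connected, compact, locally connected Hausdorff space such as $[0,1]$ is not sleek by Proposition \ref{p1}\ref{p1c}, so the equivalence really requires $X$ to be disconnected; your remark that the connected case ``needs no separate argument'' should instead be an explicit standing assumption.
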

\begin{proof}
If $X$ is locally connected, then each component of $X$ is open and so is their union. So, if $X$ is round (resp. sleek), then by Proposition \ref{p2}, union of any two components of $X$ is   round (resp. sleek) in the subspace topology.

We will prove the converse part only in the round case, since the proof is similar for the sleek case. So, let  $f:X^2\rightarrow \rr$ be a continuous non-round map for $X$. Then there exists an open subset $O$ of $X$, $x\in X\setminus O$, and $y\in O$ such that $f(x,y)\leq f(x,u)$ for all $u\in O$. As $X$ is locally connected, there exist connected neighborhoods  $U$ and $V$ of $x$ and $y$, respectively, such that $U\subseteq O$ and $V\subseteq O$. Consequently, $U\cup V\subseteq O$. Since $X$ is Hausd\"orff, we can assume without loss of generality that $U\cap V=\emptyset$. Now if $C_1$ and $C_2$ are the connected components of $X$ such that $x\in C_1$ and $y\in C_2$, then $U\subseteq C_1$ and $V\subseteq C_2$. If we define $C=C_1\cup C_2$, then $C$ is open in $X$, and so, $V$ is an open subset of $C$ such that $x\in U=C\setminus V$, $y\in V$ and $f(x,y)\leq f(x,u)$ for all $u\in V$. So, by Proposition \ref{p2}\ref{p2a}, $f$ is not a round map for $C$.
\end{proof}
\begin{proposition}\label{p10}
Let ${(X_\alpha)}_{\alpha\in J}$ be a family of topological spaces indexed by a nonempty  set $J$.  Let $X=\prod_{\alpha\in J}X_\alpha$ be endowed with the product topology.
\begin{enumerate}[label=(\alph*)]
    \item\label{p10a} The space $X$ is round  if and only if the space $X_\alpha$ is round for every $\alpha\in J$.
    \item\label{p10b} If there exists an index $\beta\in J$ for which $X_\beta$ is a sleek space having at least two points,  then the product space $\prod_{\alpha\in J}X_\alpha$ is sleek.
\end{enumerate}
\end{proposition}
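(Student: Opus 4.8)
The plan is to handle the two parts by quite different devices, and within part \ref{p10a} to treat the two implications separately. Throughout I may assume, by Proposition \ref{p1}\ref{p1a}, that no factor has an isolated point, since a round or sleek space never does; I write $\pi_\alpha:X\to X_\alpha$ for the canonical projections and keep in mind that each $\pi_\alpha$ is continuous, surjective, and \emph{open}.

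For part \ref{p10b} I would build a sleek map on the whole product out of the single sleek factor. Given a sleek map $f_\beta:X_\beta^2\to\rr$ for $X_\beta$, set $f=f_\beta\circ(\pi_\beta\times\pi_\beta)$, i.e. $f(x,y)=f_\beta(x_\beta,y_\beta)$; this is continuous as a composition. To see it is sleek, take a nonempty open $U\subseteq X$ and $x\in U$, and suppose $f(x,\cdot)$ attained a maximum at some $y\in U$. Since $\pi_\beta$ is open, $\pi_\beta(U)$ is a nonempty open subset of $X_\beta$ containing $x_\beta$; and because $f$ reads off only the $\beta$-coordinate, for every $u\in U$ we have $f_\beta(x_\beta,u_\beta)=f(x,u)\le f(x,y)=f_\beta(x_\beta,y_\beta)$. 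As $u$ ranges over $U$ its $\beta$-coordinate ranges over all of $\pi_\beta(U)$, so $f_\beta(x_\beta,\cdot)$ would have a maximum at $y_\beta$ on $\pi_\beta(U)$, contradicting the sleekness of $f_\beta$. The requirement $|X_\beta|\ge 2$ is automatic once $X_\beta$ is sleek, since a one-point space is compact and hence non-sleek by Proposition \ref{p1}\ref{p1c}.

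For the implication in part \ref{p10a} that each factor being round makes $X$ round, the plan is to combine round maps coordinatewise. Using Proposition \ref{p5} I would pick for each $\alpha$ a bounded round map $g_\alpha$ for $X_\alpha$ and assemble a single continuous $f$ on $X^2$ by superposing the values $g_\alpha(x_\alpha,y_\alpha)$ (a finite sum if $J$ is finite, and a weighted series with summable positive weights otherwise, which is precisely where the boundedness of the $g_\alpha$ is used; for uncountable $J$ the continuity of such a superposition is the delicate point). Roundness of $f$ should then follow from a one-coordinate-correction argument: given a proper nonempty open $U$, a center $x\in X\setminus U$, and a candidate minimizer $u^\ast\in U$, choose a basic box $\prod_\alpha V_\alpha\subseteq U$ through $u^\ast$; since $x\notin U$ there is an index $\beta$ with $x_\beta\notin V_\beta$, so $V_\beta$ is a proper nonempty open subset of $X_\beta$ with external center $x_\beta$, and roundness of $g_\beta$ yields $w\in V_\beta$ with $g_\beta(x_\beta,w)<g_\beta(x_\beta,u^\ast_\beta)$. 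Replacing the $\beta$-coordinate of $u^\ast$ by $w$ keeps the point in the box, hence in $U$, and strictly lowers $f$, so $f(x,\cdot)$ has no minimum on $U$.

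The reverse implication of part \ref{p10a}, that a round product forces each factor to be round, is the step I expect to be the main obstacle. The natural move is to fix $a\in X$, restrict attention to the slice $\Sigma_\alpha=\{x\in X: x_\gamma=a_\gamma\ \text{for}\ \gamma\neq\alpha\}$, which is homeomorphic to $X_\alpha$ via the inclusion $\iota$, and test the induced map $g_\alpha(s,t)=f(\iota(s),\iota(t))$. The trouble is that a slice is \emph{not} open in the product topology, so Proposition \ref{p2} does not apply and, more seriously, a configuration realizing a minimum of $g_\alpha$ on an open subset $U_\alpha\subseteq X_\alpha$ only produces a minimum of $f(\iota(s),\cdot)$ along the slice $\iota(U_\alpha)$, not over the genuinely product-open set $\pi_\alpha^{-1}(U_\alpha)$, on which $f$ may dip lower off the slice. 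Promoting a slice-minimum to a bona fide minimum over a product-open set, or else replacing the slice restriction by an argument that genuinely exploits the openness of $\pi_\alpha$, is the crux, and it is here that I would invest the most effort and exercise the most caution, since it is not clear that the coordinatewise/slice philosophy settling the other parts survives this direction intact.
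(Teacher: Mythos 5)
Your part \ref{p10b} is correct and is exactly the paper's argument with the details supplied: the key point, which you state explicitly, is that the sleekness quantifier ranges over \emph{all} nonempty open $U$ with the center \emph{inside} $U$, so the open projection $\pi_\beta(U)$ is again an admissible test set for $f_\beta$. For the forward implication of \ref{p10a} you take a genuinely different route from the paper: the paper pulls back a single round $f_\alpha$ along $\pi_\alpha$ and calls the result ``clearly'' round, which is in fact false --- for $X=\mathbb{R}^2$ and $f_1(a,b)=|a-b|$ the map $(x,y)\mapsto|x_1-y_1|$ attains the minimum value $0$ at $(0,0)$ on the open unit disk with external center $(0,2)$; the properness of $U$ and the condition $x\notin U$ simply do not survive projection, which is precisely the asymmetry that makes \ref{p10b} work and this argument fail. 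Your weighted superposition of bounded round maps, together with the one-coordinate-correction inside a basic box, repairs this and is sound for finite and countably infinite $J$. But for uncountable $J$ the gap you flag is not merely delicate, it is fatal to this device: a summable family of strictly positive weights over an uncountable index set does not exist (all but countably many weights must vanish), and your correction needs a positive weight at whichever index $\beta$ witnesses $x_\beta\notin V_\beta$, which can be arbitrary. So the forward direction remains unproved for uncountable $J$.

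The converse of \ref{p10a}, which you leave open, cannot be closed, because it is false. The paper's own proof is exactly the slice restriction you diagnose as illegitimate: a minimum of $g_\alpha$ on an open $U_\alpha\subseteq X_\alpha$ only yields a minimum of $f$ along the (non-open) slice, and there is no way to promote it to a minimum over a product-open set. A concrete counterexample: let $X_1=[0,1]\cup[2,3]$ and $X_2=\mathbb{R}$ with their usual topologies. By Proposition \ref{p1}\ref{p1b}, $X_1$ is not round. Yet $f(x,y)=e^{-|y_2|}$, where $y=(y_1,y_2)$, is a continuous map of $(X_1\times X_2)^2$ into $\rr$, and it is round for $X_1\times X_2$: if $f(x,\cdot)$ attained a minimum at $y$ on a nonempty open $U$, then $|u_2|\le|y_2|$ for all $u\in U$, which is impossible since $U$ contains a basic box about $y$ whose $\mathbb{R}$-factor is an interval $(y_2-\epsilon,y_2+\epsilon)$, and that interval contains points of absolute value exceeding $|y_2|$. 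Hence $X_1\times X_2$ is round while the factor $X_1$ is not, and the ``only if'' half of \ref{p10a} fails as stated; your instinct that the coordinatewise/slice philosophy does not survive this direction was exactly right.
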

\begin{proof}
For each  $\alpha\in J$, let $\pi_\alpha: X^2\rightarrow X_\alpha^2$, be defined by $\pi_\alpha(x,y)=(x_\alpha,y_\alpha)$ for all $x=(x_\alpha)_{\alpha\in J}, y=(y_\alpha)_{\alpha\in J}\in X$. Clearly, $\pi_\alpha$ is continuous for each $\alpha\in J$.

$(a)$ First assume that $f_\alpha$ is a round map for $X_\alpha$  for every $\alpha\in J$.
Then clearly, $f_\alpha\circ \pi_\alpha$ is a round map for $X$ for every $\alpha\in J$.  Conversely, suppose that $f$ is a round map for $X$. For each $\alpha\in J$, define $g_\alpha(x_\alpha,y_\alpha)=f((x_\beta)_{J},(y_\beta)_{\beta\in J})$, for all $x_\alpha,y_\alpha\in X_\alpha$. Then  $g_\alpha$ is a round map for $X_\alpha$ for each $\alpha$.

$(b)$ Observe that if $f_\beta$ is a sleek map for $X_\beta$ for some $\beta\in J$, then $f_\beta\circ \pi_\beta$ is a sleek map for $X$.
\end{proof}
\section{Round and sleek topological vector spaces}\label{sec:3}
Let $X$ be a real vector space, and let $p$ be a nonzero seminorm on $X$. Let $f(x,y)=p(x-y)$ for all $x,y\in X$. Then the collection of all open balls $(B_f(x,r))_{x\in X,~r>0}$ in $X$ is a basis for a topology on $X$ with respect to which $X$ becomes a topological vector space and $p$ is continuous on $X$. Consequently, $f$ is continuous on $X^2$. We call such a topology on $X$ as the one induced by the seminorm $p$ on $X$.
\begin{proposition}\label{th1}
Let $X$ be a real topological vector space, and let there be a nonzero continuous seminorm $p$ on $X$ inducing the topology of $X$. Let $f(x,y)=p(x-y)$ for all $x,y\in X$. 
\begin{enumerate}[label=(\alph*)]
    \item If $C$ is a convex subset of $X$ in the subspace topology, then  closure of every nonempty open ball in $C$ is the corresponding closed ball in $C$.
    \item Let $C$ be a subset of $X$ in the subspace topology, such that for every pair of distinct points $x$ and $y$ in $C$, there exists a sequence $(z_n)$ of points of $C$ such that
        \begin{eqnarray*}
z_n=(1+t_n)y-t_nx,~0<t_n<1,
        \end{eqnarray*}
and $z_n\rightarrow y$ as $n\rightarrow \infty$. Then interior of every non-degenerate closed ball in $C$ is the corresponding open ball in $C$.
\end{enumerate}
\end{proposition}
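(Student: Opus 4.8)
The plan is to prove each part by establishing only the nontrivial inclusion, since one inclusion is free in both cases. Writing the open and closed balls of the subspace $C$ as $B_f(x,r)\cap C$ and $B_f[x,r]\cap C$ for $x\in C$ and $r>0$, the preceding Definition shows that $B_f(x,r)\cap C$ is open in $C$ and $B_f[x,r]\cap C$ is closed in $C$. Hence, in part (a), the closure taken in $C$ of the open ball is automatically contained in the closed ball, and in part (b) the open ball is automatically contained in the interior taken in $C$ of the closed ball. So in each case I only need to produce, for a point $y$ lying on the ``boundary sphere'' $p(x-y)=r$, a suitable sequence of points of $C$ tending to $y$.

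For part (a), fix $x\in C$, $r>0$, and $y\in B_f[x,r]\cap C$. If $p(x-y)<r$ then $y$ already lies in the open ball, so assume $p(x-y)=r$. The idea is to slide from $y$ toward the centre along the segment. Put $z_t=(1-t)x+ty$ for $t\in[0,1)$; convexity of $C$ gives $z_t\in C$, and the computation $p(x-z_t)=t\,p(x-y)=tr<r$ shows $z_t\in B_f(x,r)\cap C$. Since $p(z_t-y)=(1-t)r\to 0$ as $t\to 1^{-}$, we have $z_t\to y$, so $y$ lies in the closure of $B_f(x,r)\cap C$ in $C$. This gives the inclusion $B_f[x,r]\cap C\subseteq \overline{B_f(x,r)\cap C}$ (closure in $C$), and with the free inclusion, equality follows.

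For part (b), fix $x\in C$, $r>0$, and let $y$ belong to the interior in $C$ of $B_f[x,r]\cap C$; I must show $p(x-y)<r$. Suppose instead $p(x-y)=r$ (the case $p(x-y)>r$ is excluded since $y$ lies in the closed ball). Here the hypothesis furnishes a sequence $z_n=(1+t_n)y-t_nx$ with $0<t_n<1$ and $z_n\to y$. Writing $x-z_n=(1+t_n)(x-y)$ yields $p(x-z_n)=(1+t_n)\,p(x-y)=(1+t_n)r>r$, so each $z_n$ is a point of $C$ lying outside $B_f[x,r]$. As $z_n\to y$, every neighbourhood of $y$ in $C$ meets the complement of the closed ball, contradicting that $y$ is interior to it in $C$. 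Hence $p(x-y)<r$, giving the missing inclusion (interior in $C$ of $B_f[x,r]\cap C$) $\subseteq B_f(x,r)\cap C$, and thus equality.

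I expect the genuinely delicate points to be bookkeeping rather than ideas: one must consistently take closures and interiors relative to the subspace $C$ rather than to $X$, and one must verify that the approximating points actually lie in $C$ — in part (a) this is exactly convexity, while in part (b) it is precisely what the somewhat technical hypothesis on the sequence $(z_n)$ is designed to guarantee. The only other thing to watch is the degenerate radius: restricting to $r>0$ ensures the open ball $B_f(x,r)\cap C$ is nonempty, since it contains $x$, so that the statements are not vacuous.
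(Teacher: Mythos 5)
Your proof is correct and follows essentially the same route as the paper: in (a) you slide from a boundary point toward the centre along the segment (using convexity to stay in $C$), and in (b) you use the hypothesized sequence $z_n=(1+t_n)y-t_nx$ to exhibit points of $C$ outside the closed ball converging to a putative interior boundary point. If anything, your part (b) is slightly more faithful to the stated hypothesis, since you work with the given $t_n$ rather than fixing $t_n=1/n$ as the paper does.
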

\begin{proof} For $x\in C$ and $r>0$, the open ball centered at $x$ and radius $r$ in $C$ is the set $B_{f,C}(x,r)=C\cap B_f(x,r)$. Similarly, the corresponding closed ball in $C$ is the set $B_{f,C}[x,r]=C\cap B_f[x,r]$. Also denote the closure and interior of these balls in $C$ by the symbols
${Cl}_{C}B_{f,C}(x,r)$, and ${Int_C}B_{f,C}[x,r]$, respectively.

$(a)$ Clearly, $Cl{B}_{f,C}(x,r)\subseteq B_{f,C}[x,r]$. Let $y\in B_{f,C}[x,r]$ with $f(x,y)=r$. Let
\begin{eqnarray*}
y_n=\Bigl(1-\frac{1}{n}\Bigr)y+\frac{1}{n}x,~n=2,3,\ldots.
\end{eqnarray*}
Since $C$ is convex, we have $z_n\in C$ for all $n$. We also have
\begin{eqnarray*}
f(x,z_n)=(1-1/n)p(x-y)=(1-1/n)r<r,
\end{eqnarray*}
which shows that $y_n\in B_{f,C}(x,r)$ for all $n>1$. As $f(y,y_n)=r/n\rightarrow 0$ as $n\rightarrow \infty$, the sequence $(y_n)$ of points of $B_{f,C}(x,r)$ converges to $y$. Consequently, $y\in Cl_C{B}_{f,C}(x,r)$, and so, $B_{f,C}[x,r]\subseteq Cl_C{B}_f(x,r)$. Thus,  $Cl_C {B}_f(x,r)=B_{f,C}[x,r]$.

$(b)$ Clearly, $B_{f,C}(x,r)\subseteq Int_C B_{f,C}[x,r]$. If possible, suppose there exists $y\in Int_C B_f[x,r]$ with $f(x,y)=r$. Let
\begin{eqnarray*}
z_n=\Bigl(1+\frac{1}{n}\Bigr)y-\frac{1}{n}x,~n=2,3,\ldots.
\end{eqnarray*}
By the hypothesis, $z_n\in C$ and $f(x,z_n)=(1+1/n)p(x-y)=(1+1/n)r>r$ for all $n$, which shows that $z_n\in C\setminus B_{f,C}[x,r]$. Also, $f(y,z_n)=r/n\rightarrow 0$ as $n\rightarrow \infty$, which shows that the sequence $(z_n)$ of points of the subset $C\setminus B_{f,C}[x,r]$ of $C$ converges to $y$, and so, $y$ is a limit point of the set $C\setminus B_{f,C}[x,r]$. This is absurd since  $Int_C B_{f,C}[x,r]$ is an open subset of $C$ containing $y$, which does not intersect the set $C\setminus B_{f,C}[x,r]$. So, we must have $Int_CB_{f,C}[x,r]=B_{f,C}(x,r)$.
\end{proof}
\begin{corollary}
Let $(p_\alpha)_{\alpha\in J}$ be an indexed family of seminorms on a real vector space $X$ inducing the vector space topology $\tau$ on $X$. For each $\alpha\in J$, if
\begin{eqnarray*}
f_\alpha(x,y)=p_\alpha(x-y) ~\text{for all}~ x,y\in X,
\end{eqnarray*}
then $\bar{B}_{f_\alpha}(x,r)=B_{f_\alpha}[x,r]$ and $B_{f_\alpha}^\circ[x,r]=B_{f_\alpha}(x,r)$ for all $x\in X$ and $r>0$. If the family $(p_\alpha)_{\alpha\in J}$ of seminorms is separating, then $p_\alpha$ is a round map for $X$ for every $\alpha
\in J$.
\end{corollary}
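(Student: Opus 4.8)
The plan is to obtain the two ball identities for every $\alpha$ by running the mechanism of Proposition~\ref{th1} on the convex set $C=X$, and then to feed the closure identity into Corollary~\ref{c3} once the separation hypothesis has upgraded $\tau$ to a $T_1$ topology.

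First I would fix $\alpha\in J$, a center $x\in X$, and a radius $r>0$, and prove $\bar{B}_{f_\alpha}(x,r)=B_{f_\alpha}[x,r]$ together with $B_{f_\alpha}^{\circ}[x,r]=B_{f_\alpha}(x,r)$. Since $X$ is a vector space it is convex, so only the boundary case matters: for $y$ with $p_\alpha(x-y)=r$ I would consider the radial sequences $y_n=(1-\frac{1}{n})y+\frac{1}{n}x$ and $z_n=(1+\frac{1}{n})y-\frac{1}{n}x$. A direct computation gives $f_\alpha(x,y_n)=(1-\frac{1}{n})r<r$ and $f_\alpha(x,z_n)=(1+\frac{1}{n})r>r$, so $y_n$ lies in the open ball while $z_n$ lies outside the closed ball; this is exactly the input needed for the nontrivial inclusions in parts (a) and (b) of Proposition~\ref{th1}.

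The one step that does not transfer verbatim from Proposition~\ref{th1} is the convergence $y_n\to y$ and $z_n\to y$, because there the topology is that of a single seminorm, whereas here $\tau$ is generated by the whole family and is in general strictly finer than the $p_\alpha$-topology. I would settle this by noting that $y_n-y=\frac{1}{n}(x-y)$ and $z_n-y=\frac{1}{n}(y-x)$, so that for every index $\gamma\in J$ one has $p_\gamma(y_n-y)=p_\gamma(z_n-y)=\frac{1}{n}\,p_\gamma(x-y)\to 0$. Hence both sequences converge to $y$ in $\tau$, not merely in $p_\alpha$, and the closure and interior computations are legitimate, since the closed ball is $\tau$-closed and the open ball is $\tau$-open by the $\tau$-continuity of $p_\alpha$. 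This mild topological bookkeeping is the only real obstacle, and it is the sole place where the family, rather than an individual seminorm, has to be handled by hand.

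Finally I would establish roundness under the separating hypothesis. If $(p_\gamma)_{\gamma\in J}$ separates points then, given $x\neq y$, some $p_\beta(x-y)=c>0$, and the balls $B_{f_\beta}(x,c/2)$ and $B_{f_\beta}(y,c/2)$ are disjoint $\tau$-open neighbourhoods, so $\tau$ is Hausdorff and in particular $T_1$. Each $f_\alpha$ is continuous on $X^2$ and satisfies $f_\alpha(x,x)=p_\alpha(0)=0$, and by the previous step $\bar{B}_{f_\alpha}(x,r)=B_{f_\alpha}[x,r]$ for all $x\in X$ and all $r>0$. Corollary~\ref{c3} therefore applies and yields that $f_\alpha$ is a round map for $X$ for every $\alpha\in J$, which is the assertion of the final sentence; note that it is precisely the separating hypothesis, through the resulting $T_1$ property, that unlocks the use of Corollary~\ref{c3}.
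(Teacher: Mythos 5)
Your proof is correct and follows essentially the same route as the paper: run the argument of Proposition \ref{th1} with $C=X$ and $p=p_\alpha$ to obtain both ball identities, then observe that a separating family makes $\tau$ Hausdorff (hence $T_1$) so that Corollary \ref{c3} yields roundness of each $f_\alpha$. Your extra paragraph verifying that the radial sequences $y_n$ and $z_n$ converge to $y$ in the full topology $\tau$ (via $p_\gamma(y_n-y)=\tfrac{1}{n}p_\gamma(x-y)\to 0$ for every $\gamma\in J$) is a worthwhile refinement rather than a detour: Proposition \ref{th1} as stated assumes a single seminorm induces the topology, which is not literally the case here since $\tau$ may be strictly finer than the $p_\alpha$-topology, and the paper's one-line citation passes over this point.
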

\begin{proof}
For each $\alpha\in J$, the seminorm $p_\alpha$ is continuous on $X$ in the topology $\tau$. In view of Proposition \ref{th1}, take $C=X$ and $p=p_{\alpha}$. If the family $(p_\alpha)_{\alpha\in J}$ of seminorms is separating, then $X$ is Hausd\"orff, and so, by Corollary \ref{c3}, $p_\alpha$ is a round map for $X$ for every $\alpha\in J$.
\end{proof}
\section{Examples}\label{sec:4}
\begin{example}
We show that the usual topology of $\rr$ is round as well as sleek.

The metric  $d$ on $\rr$ defined by $d(x,y)=|x-y|$ for all $x,y\in \rr$ is a round map for $\rr$, and so, $\rr$ is a round space. However, the metric $d$ is not a sleek map for $\rr$, since $[0,2)$ is an open subset of  $\rr$ containing 0 and 1 for which $d(0,\cdot):[0,2)\rightarrow \rr$ has the maximum value $d(0,1)=1$.

Let $f:\rr^2\rightarrow \rr$ be define by $f(x,y)=y$ for all $x,y\in \rr$. Clearly, $f$ is continuous. If $U$ is a nonempty open subset of $\rr$ and $x\in U$, then we claim that $f(x,\cdot):U\rightarrow \rr$ has no maximum. So, assume on the contrary that there exists such an $U$ and $x\in U$ for which $u=f(x,u)\leq f(x,y)=y$ for all $u\in U$ for some $y\in U$. Then $U\subseteq [0,y]$. Since $\{0\}$ is not open in $\rr$, we must have $y>0$. Choose $\epsilon>0$ for which $(y-\epsilon,y+\epsilon)\subseteq U\subseteq [0,y]$. Then each  $z\in (y-\epsilon,y)\cap U$ satisfies $f(x,z)=z<y=f(x,y)$, which contradicts the hypothesis. Thus, $f$ is a sleek map for $\rr$. Here, we note that $f$ is not a round map for $\rr$.
\end{example}
\begin{example}
  Consider the product space $\mathbb{R}_D\times \mathbb{R}_u$, where $\mathbb{R}_D$ and $\mathbb{R}_u$ denote the set of all real numbers endowed with  the discrete topology and the usual topology, respectively. Since every point of $\mathbb{R}_D$ is an isolated point, by Proposition \ref{p1}\ref{p1a}, the space $\mathbb{R}_D$ is neither round nor sleek. On the other hand, the usual metric of real line is a round as well as sleek map for $\mathbb{R}_u$, and so, $\mathbb{R}_u$ is a sleek space. By Proposition \ref{p10}\ref{p10b}, the product space $\mathbb{R}_D\times \mathbb{R}_u$ is sleek.
\end{example}
\begin{example}
Let $\mathbb{R}_K$ denote the set of all real numbers in the $K$-topology $\tau_K$. We show that each of the topologies $\tau_u$ and $\tau_K$ is round as well as sleek.

If we take $d(a,b)=|a-b|$ for all $a,b\in \mathbb{R}$, then clearly, $d$ is a round as well as a sleek map for $\mathbb{R}_u$.

Now let $f(a,b)=|b|$ for all $a,b\in \mathbb{R}$, then clearly $f$ is continuous on each of the topological spaces $\mathbb{R}_u$, $\mathbb{R}_K$, and $\mathbb{R}_\ell$. We show that $f$ is a round map for $\mathbb{R}_K$. If possible, suppose there exists $U\in \tau_K$, $x\in \mathbb{R}_K\setminus U$, and $y\in U$ for which $|y|=f(x,y)\leq f(x,u)=|u|$ for all $u\in U$. Consequently, $(-|y|,|y|)\cap U=\emptyset$, which shows that $0\not\in U$, and so, $U\in \tau_u$. Since $y\in U$, we have $y\neq 0$; and since $U\in \tau_u$, there exists an $\epsilon>0$ with $0<\epsilon<|y|$ for which $(y-\epsilon,y+\epsilon)\subseteq U$. Let
     \begin{eqnarray*}
z_\epsilon=\begin{cases} y-(\epsilon/2), &~\text{if}~y>0;\\
y+(\epsilon/2), &~\text{if}~y<0.
\end{cases}
\end{eqnarray*}
Then $z_\epsilon\in (y-\epsilon,y+\epsilon)\subseteq U$, such that $f(x,z_\epsilon)=|z_\epsilon|=|y|-(\epsilon/2)<|y|$, which is a contradiction. This proves that $f$ is a round map for $\mathbb{R}_K$. We leave it to the reader to verify that  $f$ is a sleek map for $\mathbb{R}_K$.
\end{example}
\begin{example}
Let $f:\mathbb{R}^2\rightarrow \rr$, where
\begin{eqnarray*}
f(x,y)=\begin{cases} |y|,~&\text{if}~y\in \mathbb{Q};\\
    0,~&\text{if}~y\not\in \mathbb{Q}.
\end{cases}
\end{eqnarray*}
Let $\tau$ be the collection $\{f^{-1}(V)~\mid~V ~\text{is open in }~ \rr\}$. Then $\tau$ is the smallest topology on $\mathbb{R}$ in which $f$ is continuous. Observe that $\tau$ is not a $T_1$-topology on $\mathbb{R}$, since no singleton set is closed. Also, every nonempty open subset of  $(\mathbb{R},\tau)$ contains infinitely many rational numbers. Using these observations, we show that $f$ is a sleek map for $(\mathbb{R},\tau)$. Suppose on the contrary that for any continuous map $f:\mathbb{R}^2\rightarrow\rr$, there exists $U\in \tau$ and $x,y\in U$ for which $f(x,u)\leq f(x,y)$ for all $u\in U$. Then $U\subseteq B_f[x,f(x,y)]$. Since $U\cap \mathbb{Q}$ is an infinite set, we must have $f(x,y)>0$, and so,  $y$ is a nonzero rational number so that $f(x,y)=|y|$. Since $y\in U$, and $U$ is open, there exists an $\epsilon>0$ for which
$0<|y|-\epsilon$ and $\pi_2(f^{-1}(|y|-\epsilon,|y|+\epsilon))\subseteq U$, where $\pi_2:\mathbb{R}^2\rightarrow \mathbb{R}$ is the projection map defined by $\pi_2(a,b)=b$ for all $a,b\in \mathbb{R}$, which is an open map, and we note that
\begin{eqnarray*}
\pi_2(f^{-1}((|y|-\epsilon,|y|+\epsilon)))=\{(|y|-\epsilon,|y|+\epsilon)\cup (-|y|-\epsilon,-|y|+\epsilon)\}\cap\mathbb{Q}.
\end{eqnarray*}
If we choose a rational number $q\in (|y|,|y|+\epsilon)$, then $q\in U$ but $f(x,q)=q>|y|=f(x,y)$, which is a contradiction. This proves that $f$ is a sleek map for $(\mathbb{R},\tau)$.

Observe that $f$ is not a round map for $(\mathbb{R},\tau)$, since $U=(\mathbb{R}\setminus \mathbb{Q})\cup (-1,1)$   is an open subset of $(\mathbb{R},\tau)$ and $2\not\in U$ such that $f(2,\cdot):U\rightarrow \rr$ has the minimum value $f(2,\pi)=0$.
\end{example}
\begin{example}
Consider $\mathbb{R}$ in the finite complement topology $\tau$. Let $f:(\mathbb{R},\tau)\times (\mathbb{R},\tau)\rightarrow \rr$ be continuous. Let  $x,y,z\in \mathbb{R}$ such that $y\neq z$. If possible, let $f(x,y)<f(x,z)$, then for $f(x,y)<\alpha<f(x,z)$, by the continuity of $f(x,\cdot)$, each of the sets $U=f(x,\cdot)^{-1}(\alpha,\infty)$ and $V=f(x,\cdot)^{-1}[0,\alpha)$ is a nonempty proper open subset of $\mathbb{R}$ in the finite complement topology. Consequently,  $U\cap V\in \tau$, which is also a proper nonempty subset of $\mathbb{R}$. On the other hand, $U\cap V=f(x,\cdot)^{-1}(\alpha,\infty)\cap f(x,\cdot)^{-1}[0,\alpha)=\emptyset$, which is a contradiction. Similarly, $f(x,y)>f(x,z)$ will lead to a contradiction. We conclude that $f(x,\cdot)$ is a constant map on $(\mathbb{R},\tau)$  for every real number $x$. It follows that the topology $\tau$ is neither round nor sleek.

More generally, there exists a regular space having at least two points on which every real valued continuous map is constant (See \cite{novak}). Such a topological space is neither round nor sleek.
\end{example}

\begin{remark}
The present work lays foundation of the concepts of round and sleek topological spaces. Under a mild condition on $f$, that is, $f(x,x)=0$ for all $x\in X$, one has the equivalence of the notions of  ``$f$ is a round map for $X$'' and ``closure of every non-degenerate open ball (with respect to $f$) is the corresponding closed ball'' provided that $X$ satisfies the $T_1$-separation axiom. On the other hand the condition $f(x,x)=0$ for all $x\in X$ is enough to have the equivalence of  the statements ``$f$ is a sleek map for $X$'' and ``interior of every non-degenrate closed ball (with respect to $f$) is the corresponding open ball''. These concepts behave well with the products. The following natural questions deserve further investigation:
\begin{enumerate}
    \item Which round topological spaces are sleek$?$
    \item Which sleek topological spaces are  round$?$
    \item Is a metrizable round (resp. sleek) space is metrically round (resp. sleek)$?$ 
\end{enumerate}
\end{remark}



\subsection*{Compliance with Ethical Standards}
The author declares that he has no conflict of interest.

\end{document}